\DeclareSymbolFont{AMSb}{U}{msb}{m}{n}
\DeclareMathSymbol{\N}{\mathbin}{AMSb}{"4E}
\DeclareMathSymbol{\Z}{\mathbin}{AMSb}{"5A}
\DeclareMathSymbol{\Q}{\mathbin}{AMSb}{"51}
\DeclareMathSymbol{\I}{\mathbin}{AMSb}{"49}
\DeclareMathSymbol{\F}{\mathbin}{AMSb}{"46}
\newcommand{\ZZ}{\mathfrak{Z}_{crit}}
\newcommand{\OO}{\Omega^1(\R)}
\newcommand{\DD}{\mathcal{D}^{0}_{crit}}
\newcommand{\DDC}{\mathcal{D}_{crit}}
\newcommand{\DDH}{\mathcal{D}_{\hbar}}
\newcommand{\LAH}{\mathcal{A}_{\hbar}}
\newcommand{\LAHH}{\mathcal{A}_{-\hbar}}
\newcommand{\AL}{\mathcal{A}_{crit}}
\newcommand{\gc}{\hat{\mathfrak{g}}_{crit}}
\newcommand{\WW}{\mathcal{W}_{\hbar}}
\newcommand{\M}{\mathcal{M}}
\newcommand{\B}{\mathcal{B}^0}
\newcommand{\INDZ}{\text{Ind}_{\ZZ}(\ZZ^c)}
\newcommand{\INDZC}{\text{Ind}_{\ZZ}^{ch}(\ZZ^c)}
\newcommand{\INDL}{\text{Ind}_{\R}(L)}
\newcommand{\A}{\mathcal{A}}
\newcommand{\C}{\mathcal{C}}
\newcommand*{\longhookrightarrow}{\ensuremath{\lhook\joinrel\relbar\joinrel\rightarrow}}
\theoremstyle{plain}
\newtheorem{thm}{Theorem}[section]
\newtheorem{lem}[thm]{Lemma}
\newtheorem{prop}[thm]{Proposition}
\newtheorem{defprop}[thm]{Definition-Proposition}
\theoremstyle{definition}
\newtheorem{defn}{Definition}[section]
\newtheorem{remark}[thm]{Remark}
\newcommand{\bigslant}[2]{{\raisebox{.2em}{$#1$}\left/\raisebox{-.2em}{$#2$}\right.}}
\newcommand{\IND}{\text{Ind}_{\R}(\RC)}
\newcommand{\INDR}{\text{Ind}_{\R}^{ch}(\RC)}
\newcommand{\g}{\mathfrak{g}}
\DeclareMathOperator{\End}{End}
\DeclareMathOperator{\Hom}{Hom}
\renewcommand{\tilde}{\widetilde}
\newcommand{\DX}{\mathcal{D}_{X}}
\newcommand{\R}{\mathcal{R}}
\newcommand{\OC}{\Omega^{c}(\R)}
\newcommand{\RC}{\mathcal{R}^{c}}
\newcommand{\LI}{\mathcal{L}}
\newcommand{\INDC}{\text{Ind}_{\R}^{cl}(L^c)}
\newcommand{\INDLD}{\text{Ind}_{\R}^{ch}(L^c)}
\newcommand{\INDLL}{\text{Ind}_{\R}(L^c)}
\newcommand{\OX}{\Omega_X}
\title[W-algebras and CDO at the critical level]{W-algebras and Chiral Differential Operators at the critical level}
\author{ Giorgia Fortuna}
\date{\today}
\begin{document}
\maketitle
\begin{abstract}
 Let $\AL$ be the chiral algebra corresponding to the affine Kac-Moody algebra at the critical level $\gc$. Let
$\ZZ$ be the center of $\AL$. The commutative chiral algebra $\ZZ$
admits a canonical deformation into a non-commutative chiral 
algebra $\WW$. In this paper we will express the resulting first order deformation via the chiral algebra
$\mathcal{D}_{crit}$ of chiral differential operators on $G((t))$ at the critical level.

\end{abstract}

\section{Introduction}\label{intro}
\subsection{}
\noindent Let $X$ be a smooth curve over the complex numbers with structure sheaf $\mathcal{O}_X$ and sheaf of differential operators
 $\DX$.
Let $\g$ be a simple Lie algebra over $\mathbbm{C}$, and let $\kappa$ be an invariant non-degenerate bilinear form $$\kappa:\g\otimes \g\rightarrow \mathbbm{C}.$$
Note that the condition on $\g$ being simple implies that $\kappa$ is a multiple of the Killing form $\kappa_{kill}$.  
Let $\hat{\g}_{\kappa}$ be the affine Kac-Moody algebra given as the central extension of the loop algebra $\g((t))$
\begin{equation}\label{gc}
0\rightarrow \mathbbm{C}\mathbbm{1}\rightarrow \hat{\g}_{\kappa}\rightarrow \g((t))\rightarrow 0,
\end{equation}
with bracket given by
\[
 [af(t),bg(t)]=[a,b]f(t)g(t)+\kappa(a,b)\text{Res}(fdg)\cdot\mathbbm{1},
\]
\noindent where $a$ and $b$ are elements in $\g$, and $\mathbbm{1}$ is the central element.\\
\noindent Our basic tool in this paper is the theory of chiral algebras. In particular we will use
 the chiral algebra $\A_{\kappa}$ attached 
to $\hat{\g}_{\kappa}$ as defined in \cite{AG}. We will assume the reader is familiar with
the foundational work \cite{BD} on this subject. However we will briefly recall some basic definitions and notations.
Throughout this paper $\Delta:X\hookrightarrow X\times X$ will denote the diagonal embedding and
 $j:U\rightarrow X\times X$ its complement, where $U=(X\times X)-\Delta(X)$.\\  For any two sheaves $\M$ and $\mathcal{N}$ 
denote by $\M\boxtimes \mathcal{N}$ the external tensor product 
$\pi_1^*\M\underset{\mathcal{O}_{X\times X}}{\otimes}\pi_2^*\mathcal{N}$, where $\pi_1$ and $\pi_2$ are the 
two projections from $X\times X$ to $X$.  For a right $\DX$-module $\M$ define the extension $\Delta_!(\M)$ as
\[
\Delta_!(\M)=j_*j^*(\Omega_X\boxtimes \M)/\Omega_X\boxtimes \M.
\]
Sections of $\Delta_!(\M)$ can be thought as distributions on $X\times X$ with support on the
 diagonal and with values on $\M$. If $\M$ and $\mathcal{N}$ are two right $\DX$-modules, we will denote by 
$\M\overset{!}{\otimes}\mathcal{N}$ the right $\DX$-module $\M\otimes \mathcal{N}\otimes \Omega_X^*$.\\

\vspace{0.2cm}
\noindent Recall  that a \emph{chiral algebra} over $X$ is a right $\DX$-module $\A$ endowed with a \emph{chiral bracket}, i.e. with a map of 
$\mathcal{D}_{X^2}$-modules 
\[
 \mu:j_*j^*(\A\boxtimes \A)\rightarrow \Delta_!(\A)
\]
which is antisymmetric and satisfies the Jacobi identity.\\
We will denote by $[\,,\,]_{\A}$ the restriction of $\mu$ to $\A\boxtimes \A\hookrightarrow j_*j^*(\A\boxtimes \A)$.\\ 
By a \emph{commutative chiral algebra} we mean a chiral algebra $\R$ such that $[\,,\,]_{\R}$ vanishes. 
In other words it is a chiral algebra such that the chiral bracket $\mu$ factors as
\[
 j_*j^*(\R\boxtimes \R)\rightarrow \Delta_!(\R\overset{!}{\otimes}\R)\rightarrow\Delta_!(\R).
\]
\noindent Equivalently, $\R$ can be described as a right $\DX$-module with a commutative product on the corresponding 
left $\DX$-module $\R^l:=\R\otimes \Omega_X^*$. For instance, in the case $\R=\Omega_X$, with chiral product defined as $\mu(f(x,y)dx\boxtimes dy)=f(x,y)dx\wedge dy \pmod {\Omega^2_{X\times X}}$, you simply recover the commutative product on the sheaf of functions on $X$, which is in fact the left $\DX$-module corresponding to $\Omega_X$.
\vspace{0.3cm}

 \noindent Consider now the chiral algebra $\A_{\kappa}$ attached to $\hat{\g}_{\kappa}$. 
For $\kappa=\kappa_{crit}=-\frac{1}{2}\kappa_{kill}$ denote by $\ZZ$ the center of $\A_{crit}:=\A_{\kappa_{crit}}$.
 This is a commutative chiral algebra with the property that the fiber $\left(\ZZ\right)_x$ over any point $x\in X$  is equal
 to the commutative algebra $\text{End}_{\gc}(\mathbbm{V}_{\g,crit}^{0})$, where 
$$\mathbbm{V}_{\g,crit}^0:=\text{Ind}_{\g[[t]]\oplus \mathbbm{C}}^{\gc}\mathbbm{C}.$$
Note that in the above definition we are using the fact that the exact sequence (\ref{gc}) splits over $\g[[t]]\subset \g((t))$, hence
we can regard the direct sum $\g[[t]]\oplus \mathbbm{C}$ as a subalgebra of $\gc$.\\
The chiral algebra $\ZZ$ is closely related to the center of the twisted enveloping algebra of $\gc$. For any 
chiral algebra $\A $ and any point $x\in X$, we can form an associative topological algebra $\hat{\A}_{x}$
 with the property that its discrete continuous modules 
are the same as $\A$-modules supported at $x$ (see \cite{BD} 3.6.6).
In this case the topological associative algebra corresponding to $\ZZ$ is isomorphic to the center of
 the appropriately completed twisted  enveloping algebra $U^{'}(\gc)$ of $\hat{\g}_{crit}$, where  
$U^{'}(\gc)$ denotes $U(\hat{\g}_{crit})/(\mathbbm{1}- 1)$, (here $1$ denotes the identity element in $U(\gc)$).\\

\vspace{0.2cm}
The importance of choosing the level $\kappa$ to be $\kappa_{crit}$ relies on the fact that the center $\hat{\mathfrak{Z}}_{crit}$ of 
$U^{'}(\gc)$ happens to be very big, unlike any other level $\kappa\neq \kappa_{crit}$ where the center is in fact just $\mathbbm{C}$, as shown in \cite{FF}.
Another crucial feature of the critical level is that  $\hat{\mathfrak{Z}}_{crit}$ carries a natural Poisson structure, obtained by 
considering the one parameter deformation of $\kappa_{crit}$ given by $\kappa_{crit}+\hbar\kappa_{kill}$, as explained below in the 
language of chiral algebras. Moreover, according to \cite{FF, F1, F2}, the center $\hat{\mathfrak{Z}}_{crit}$ is isomorphic, as Poisson 
algebra, to the \emph{quantum Drinfeld-Sokolov reduction} of $U'(\gc)$ introduced in \cite{FF}.
In particular the above reduction provides a quantization of the Poisson algebra $\hat{\mathfrak{Z}}_{crit}$ that will be 
central in this article.\\ Since the language we have chosen is the one of chiral algebras, we will 
now reformulate these properties for the algebra $\ZZ$.\\

The commutative chiral algebra 
$\ZZ$ can be equipped with a Poisson structure which can be described in either of the following two equivalent ways:
\begin{itemize}
\item For any $\hbar\neq 0$ let $\kappa$ be any non critical level $\kappa=\kappa_{crit}+\hbar\kappa_{kill}$ and denote by
$\A_{\hbar}$ the chiral algebra $\A_{\kappa}$. Let $z$ and $w$ be elements of $\ZZ$. 
Let $z_{\kappa}$ and $w_{\kappa}$ be any two families of elements in $\A_{\hbar}$ such that $z=z_{\kappa}$ and $w=w_{\kappa}$ when
 $\hbar=0$. 
Define the Poisson bracket
of $z$ and $w$ to be
\[
\{z,w\}=\frac{[z_{\kappa},w_{\kappa}]_{\A_{ \hbar}}}{\hbar}\pmod {\hbar}.
\]

\item The functor $\Psi$ of semi-infinite cohomology  introduced in \cite{FF} (which is the analogous of the quantum  Drinfeld-Sokolov reduction mentioned before 
and whose main properties will be recalled later),
 produces a 1-parameter family of chiral 
algebras $\{\mathcal{W}_{\hbar}\}:=\{\Psi(\A_{\hbar})\}$  such that $\mathcal{W}_{0}\simeq\ZZ$.
 Define the Poisson structure on $\ZZ$ as $$\{z,w\}=\frac{[\tilde{z}_{\hbar},\tilde{w}_{\hbar}]_{\WW}}{\hbar} \pmod{\hbar}$$
where  $z=\tilde{z}_{\hbar}|_{\hbar=0}$ and  $w=\tilde{w}_{\hbar}|_{\hbar=0}$.
\end{itemize}
Although the above two expressions look the same, we'd like to stress the fact that, unlike the second construction, in the first we are not given any
deformation of $\ZZ$. In other words the elements $z_{\kappa}$ and $w_{\kappa}$ do not belong to the center of $\A_{\kappa}$ (that in fact is trivial).
It is worth noticing that the associative topological algebras $\hat{\mathcal{W}}_{h}$ associated to them
 (usually denoted by $W_{\hbar}$) are the well known \emph{W-algebras}.\\
As in the case of usual algebras, the Poisson structure on $\ZZ$ gives the sheaf of K\"{a}hler differentials
 $\Omega^1(\ZZ)$ a structure of Lie$^*$ algebroid. A remarkable feature, when dealing with chiral algebras, is that the 
existence of a quantization 
$\{\mathcal{W}_{\hbar}\}$ of $\ZZ$ allows us to construct what is called a \emph{chiral extension} $\Omega^c(\ZZ)$ of the 
Lie$^*$ algebroid $\Omega^1(\ZZ)$, and moreover, as it is explained in \cite{BD} 3.9.11,  this establishes an equivalence of 
categories between $1$-st order quantizations of $\ZZ$ and chiral 
extensions of $\Omega^1(\ZZ)$. This equivalence is the point of departure for this paper.\\

\vspace{0.2cm}
We will now 
recall the definitions of the main objects we will be using (a more detailed description can be found in \cite{BD} Section 3.9.).\\
Since we will only deal with right $\DX$-modules, for any two right $\DX-$modules $\M$ and $\mathcal{N}$, we will simply write $\M\otimes \mathcal{N}$ instead of 
$\M\overset{!}{\otimes}\mathcal{N}$. We will also assume that our modules are flat as $\mathcal{O}_X$-modules. Therefore we always have 
an exact sequence 
\[
 0\rightarrow \M\boxtimes \mathcal{N}\rightarrow j_*j^*(\M\boxtimes \mathcal{N})\rightarrow \Delta_!(\M\otimes \mathcal{N})\rightarrow 0,
\]
(note that we are already adopting the different notation for the tensor product).

\vspace{0.3 cm}

\noindent Let $(\R,\,m:\R\otimes \R\rightarrow \R)$ be a commutative chiral algebra.
\begin{defn}\label{lieextension}
Let $L$ be a Lie$^*$ algebra acting by derivations on $\R$ via a map $\tau$. An \emph{$\R$-extension} of $L$ is a 
$\DX$-module $L^c$ fitting in the short exact sequence
\[
0\rightarrow \R\rightarrow L^c\xrightarrow{\pi} L\rightarrow 0
\]
together with a Lie$^*$ algebra structure on $L^c$ such that $\pi$ is a morphism of Lie$^*$ algebras and the adjoint 
action of $L^c$ on $\R\subset L^c$ coincides with $\tau\circ\pi$.
\end{defn}
\begin{defn}
 A Lie$^*$ $\R$-algebroid $\LI$ is a Lie$^*$ algebra with a central action of $\R$ 
( a map $\R\otimes \LI\rightarrow \LI$) and a Lie$^*$ action
 $\tau_{\LI}$ of $\LI$ on $\R$ by derivations such that
 \begin{itemize}
  \item $\tau_{\LI}$ is $\R$-linear with respect to the $\LI$-variable.\\
  \item The adjoint action of $\LI$ is a $\tau_{\LI}$-action of $\LI$ (as a Lie$^*$ algebra) on $\LI$ (as an $\R$-module).
 \end{itemize}

\end{defn}

\noindent In the next definitions we consider objects equipped with a chiral action of $\R$ instead of just a central one. 
\begin{defn}\label{chiral algebroid}
Let $\R$ be a commutative chiral algebra, and $\LI $ be a Lie$^*$ $\R$-algebroid. A \emph{chiral $\R$-extension} of $\LI$ is a 
$\mathcal{D}_{X}$-module $\LI^c$ such that

\begin{equation}\label{AAA}
0\rightarrow \R\xrightarrow{i} \LI^c\rightarrow \LI\rightarrow 0,
\end{equation}
together with a Lie$^*$ bracket and a chiral $\R$-module structure $\mu_{\R,\LI^c}$ on $\LI^c$ satisfying the following properties:
\begin{itemize}
\item 
The arrows in (\ref{AAA}) are compatible with the Lie$^*$ algebra and chiral $\R$-module
structures.\\
\item The chiral operations $\mu_{\R}$ and $\mu_{\R,\LI^c}$ are compatible with the Lie$^*$ actions of $\LI^c$.\\
\item The $*$ operation that corresponds to $\mu_{\R,\LI^c}$ (i.e. the restriction of $\mu_{\R,\LI^c}$ to $\R\boxtimes \LI^c$) is 
equal to $-i\circ \sigma\circ\tau_{\LI^c,\R}\circ \sigma$, where 
$\tau_{\LI^c,\R}$ is the $\LI^c$-action on $\R$ given by the projection $\LI^c\rightarrow \LI$ and the $\LI$ action $\tau_{\LI,\R}$ on $\R$ and $\sigma$ is the transposition of variables. In other words the following diagram commutes
\[
 \xymatrix{\R\boxtimes \LI^c\ar[r]\ar[d]&j_*j^*(\R\boxtimes \LI^c)\ar[r]^-{\mu_{\R,\LI^c}}&\Delta_!(\LI^c).\\
\R\boxtimes \LI\ar[r]^{\sigma\circ \tau_{\LI,\R}\circ\sigma}&\Delta_!(\R)\ar[ur]_{\Delta_!(i)}&}
\]

\end{itemize}
\end{defn}
The previous definition can be extended by replacing $\R$ with any  chiral algebra $\C$ endowed with a central action of $\R$.
More precisely a \emph{ chiral $\C$-extension} of $\LI$ is a $\DX$-module $\LI^c$ such that 
\begin{equation}\label{AAAA}
 0\rightarrow \C\rightarrow \LI^c\rightarrow \LI\rightarrow 0,
\end{equation}
together with a Lie$^*$ bracket and a chiral $\R$-module structure $\mu_{\R,\LI^c}$ on $\LI^c$ such
that:
\begin{itemize}
\item 
The arrows in (\ref{AAAA}) are compatible with the Lie$^*$ algebra and chiral $\R$-module
structures.\\
\item The chiral operations $\mu_{\C}$ and $\mu_{\R,\LI^c}$ are compatible with the Lie$^*$ actions of $\LI^c$.\\
\item The structure morphism $\R\rightarrow \C$ is compatible with the Lie$^*$ actions of $\LI^c$.\\
\item The $*$ operation that corresponds to $\mu_{\R,\LI^c}$ (i.e. $\mu_{\R,\LI^c}$ restricted to $\R\boxtimes \LI^c$) is equal to $-i\circ \sigma\circ\tau_{\LI^c,\R}\circ \sigma$, where $\tau_{\LI^c,\R}$ is the $\LI^c$-action on $\R$, $\sigma$ is the transposition of variables and $i$
is the composition of the structure morphism $\R\rightarrow \C$ and the embedding $\C\subset \LI^c$.
\end{itemize}

\begin{defn}\label{chiral envelope}
The \emph{chiral envelope} of the chiral extension $(\R,\C,\LI^c,\LI)$ is a pair $(U(\C,\LI^c),\phi^c)$, 
where $U(\C,\LI^c)$ is a chiral algbera  and $\phi^c$ is a homomorphism of $\LI^c$ into $U(\C,\LI^c)$, 
satisfying the following universal property. For every chiral algebra $\mathcal{A}$ and any morphism $f:\LI^c\rightarrow \mathcal{A}$ such that:
\begin{itemize}\itemsep -2pt
\item $f$ is a morphism of Lie$^*$ algebras.\\
 \item $f$ restricts to a morphism of chiral 
algebras on $\C\subset \LI^c$.\\
\item $f$ is a morphism of $\R$-modules (where the $\R$-action on $\mathcal{A}$ is the one given by the above point),
\end{itemize}
 there exist a unique map $\overline{f}:U(\C,\LI^c)\rightarrow \mathcal{A}$ that makes the following diagram commutative
\[
 \xymatrix{\LI^c\ar[r]^{f}\ar[d]^-{\phi^c}&\mathcal{A}\\
U(\C,\LI^c)\ar[ur]_{\overline{f}}&}.
\]
It is shown in \cite{BD} that such object exists. When $\C=\R$ we will simply write $U(\LI^c)$ instead of $U(\R,\LI^c)$.

\end{defn}

\subsection{}\label{VVV}{\scshape Chiral extensions of $\Omega^1(\R)$.} Let $\R$ be a commutative chiral algebra equipped with 
a Poisson bracket $\{\,,\,\}:\R\boxtimes \R\rightarrow \Delta_!(\R)$. We will refer to such object as \emph{chiral-Poisson algebra}.
  As we mentioned before, any Poisson structure on a commutative chiral algebra $\R$ gives the sheaf $\OO$ a structure of a 
Lie$^*$ algebroid. 
Now consider the following: to a chiral extension 
\[
0\rightarrow \R\rightarrow \Omega^c(\R)\rightarrow \OO\rightarrow 0,
\]
consider the pull-back of the above sequence via the differential $d:\R\rightarrow \OO$. 
 The resulting short exact sequence is a $\mathbbm{C}[\hbar]/\hbar^2$-deformation of the chiral-Poisson algebra $\R$\footnote{If $\{\,,\,\}$ denotes the Poisson bracket on $\R$, this is indeed a quantization of ($\R, 2\{\,,\,\}$)}.
If we denote by $\mathcal{Q}^{ch}(\R)$ the groupoid of $\mathbbm{C}[\hbar]/\hbar^2$-deformations of the chiral-Poisson algebra $\R$, and by $\mathcal{P}^{ch}(\OO)$ the groupoid of chiral $\R$-extensions of $\OO$, 
the above map defines a functor 
\[
 \mathcal{P}^{ch}(\OO)\rightarrow \mathcal{Q}^{ch}(\R).
\]
 In \cite{BD} 3.9.10. the following is shown.
\begin{thm}\label{bbdd}
 The above functor defines an equivalence between $\mathcal{P}^{ch}(\OO)$ and $\mathcal{Q}^{ch}(\R)$.
\end{thm}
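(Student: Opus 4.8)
The plan is to construct an explicit inverse functor $\mathcal{Q}^{ch}(\R)\to\mathcal{P}^{ch}(\OO)$ and check that the two compositions are naturally isomorphic to the identity. Given a $\mathbbm{C}[\hbar]/\hbar^2$-deformation $\R_\hbar$ of the chiral-Poisson algebra $\R$, one has an exact sequence of $\DX$-modules $0\to\R\xrightarrow{\hbar}\R_\hbar\to\R\to 0$, and the deformed (flat) chiral multiplication on $\R_\hbar$ reduces mod $\hbar$ to the commutative product $m$ on $\R$ while its $\hbar$-linear term records the Poisson bracket. The idea is to let $\Omega^c(\R_\hbar)$ be the $\DX$-module of "chiral first-order differential operators" on $\R_\hbar$ that are $\mathbbm{C}[\hbar]/\hbar^2$-linear, preserve the ideal $\hbar\R$, and whose induced operator on $\R=\R_\hbar/\hbar\R$ is multiplication by a section of $\R$; concretely this is the pull-back, under $d:\R\to\OO$, of the naive extension built from derivations of $\R_\hbar$ reduced mod $\hbar$. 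First I would make this construction precise, extract from it the Lie$^*$ bracket (commutator of operators, with the $\hbar$-correction producing exactly the bracket on $\Omega^c(\R)$) and the chiral $\R$-module structure, and verify the compatibility axioms of Definition~\ref{chiral algebroid}, in particular that the $*$-operation $\R\boxtimes\Omega^c(\R_\hbar)\to\Delta_!(\Omega^c(\R_\hbar))$ equals $-i\circ\sigma\circ\tau\circ\sigma$.

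Next I would check functoriality: a morphism of deformations is an isomorphism of the underlying chiral algebras over $\mathbbm{C}[\hbar]/\hbar^2$ reducing to the identity on $\R$, and such a map induces canonically an isomorphism of the associated extensions of $\OO$, since the construction only used the commutator and the reduction mod $\hbar$, both of which are preserved. This gives the candidate inverse functor on the level of groupoids.

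Then comes the verification that this is inverse to the functor of the theorem on both sides. In one direction, starting from a chiral $\R$-extension $\Omega^c(\R)$, the pull-back sequence along $d$ is by definition the deformation $\R_\hbar$, and applying the construction above to $\R_\hbar$ must reproduce $\Omega^c(\R)$; the key point is that $\Omega^c(\R)$ is generated over $\R$ by $d(\R)$ together with the bracket, so the operator-theoretic description recovers it, and the universal property of the chiral envelope (Definition~\ref{chiral envelope}) is available if one wants to phrase the reconstruction invariantly. In the other direction one checks that a deformation $\R_\hbar$ is recovered from its extension by pulling back along $d$, which is essentially tautological from how $\Omega^c(\R_\hbar)$ was defined. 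I expect the main obstacle to be the bookkeeping in the $*$-operation axiom and the verification that the commutator bracket on the operator module is genuinely a Lie$^*$ bracket (Jacobi identity) that matches the one forced on a chiral $\R$-extension — this is where the distinction between the central and the chiral $\R$-action on $\Omega^c(\R)$ really enters, and where one must use flatness of the chiral multiplication on $\R_\hbar$ rather than just its associativity mod $\hbar^2$. Since this is precisely the content of \cite{BD} 3.9.10, I would at this stage simply cite that reference for the detailed diagram chases, having indicated the shape of the argument.
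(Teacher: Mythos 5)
Your overall strategy (build an explicit inverse functor, check the two composites) is the right shape, but the construction you sketch has a genuine gap at its central step, and it is not the route the paper (or \cite{BD} 3.9.10, which is all the paper actually invokes for this theorem) takes. The object you describe --- $\mathbbm{C}[\hbar]/\hbar^2$-linear operators on $\R_\hbar$ preserving $\hbar\R$ whose reduction mod $\hbar$ is multiplication by a section of $\R$ --- has its ``symbol'' landing in $\R$, not in $\OO$. So at best it produces an extension of $\R$ by something, i.e.\ essentially reproduces the deformation $\R^c=\R_\hbar/\hbar^2\R_\hbar$ itself, rather than the required extension $0\to\R\to\Omega^c(\R)\to\OO\to 0$. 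Your phrase ``the pull-back, under $d:\R\to\OO$, of the naive extension'' points in the wrong direction: pulling back along $d$ takes extensions of $\OO$ to extensions of $\R$ (that is the functor of the theorem), whereas the inverse must go the other way, and there is no pushforward along $d$ available for free. The actual content of the inverse is precisely this passage from an $\R$-extension of the Lie$^*$ algebra $\R$ to a chiral $\R$-extension of the algebroid $\OO$: in the paper one first induces up to the rigidified algebroid $\R\otimes\R$ (via the distinguished extension $\text{Ind}_{\R}(L)$ of Definition-Proposition \ref{defp} and a Baer sum, or equivalently via the quotient of $\text{Ind}_{\R}(L^c)$ in \ref{fine}), and then quotients by the Leibniz relations encoded in the map $\alpha_1-\alpha_2-\alpha_3$ of \ref{rrr}. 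None of this appears in your sketch, and it cannot be bypassed.

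Two further points. First, in the chiral setting ``the $\DX$-module of chiral first-order differential operators on $\R_\hbar$'' is not an innocent object: $\mathcal{H}om$ of $\DX$-modules is not naturally a $\DX$-module, and a chiral $\R$-extension requires a genuine chiral (not merely central) $\R$-action together with the compatibility in Definition \ref{chiral algebroid}; this structure cannot simply be read off from an endomorphism algebra. Second, your claim that one of the two composites is ``essentially tautological'' is too quick: verifying that the pull-back along $d$ of the constructed extension returns the original deformation is exactly the content of the Proposition in \ref{cacca}, which requires building the comparison map $D^c$ and checking it annihilates the Leibniz relations. Finally, be aware that the paper itself offers no proof of Theorem \ref{bbdd} beyond the citation of \cite{BD} 3.9.10; Section \ref{conomega} exists precisely because the equivalence, as proved there, does not come with the explicit description of the inverse that the rest of the argument needs.
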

\noindent Consider now the following diagram:
\[
\xymatrix{\mathcal{P}^{ch}(\OO)\ar[r]\ar[d]^{\simeq}& \left\{ \text{Lie$^*$ algebroid structures on $\OO$}\right\}\ar[d]^{\simeq}\\
\mathcal{Q}^{ch}(\R)\ar[r]&\left\{ \text{Chiral-Poisson structures on $\R$}\right\}.
} 
\]
A natural question to ask is the following: if we consider the quantization of $\ZZ$ given by
 applying the functor of semi-infinite cohomology to $\A_{\hbar}$,
how does the corresponding chiral extension of $\Omega^1(\ZZ)$ look like? 
The answer to the above question is the main body of this paper.\\
We will give
 an explicit construction of $\Omega^c(\R)$ for an arbitrary chiral-Poisson algebra $\R$. 
In the case where $\R=\ZZ$ we will see how this chiral extension relates to the \emph{chiral algebra of differential 
operators on the loop group $G((t))$} at the critical level introduced in \cite{AG}, where $G$ 
is the algebraic group of adjoint type corresponding to $\g$.\\
\subsection{}{\scshape Main Theorem.} \noindent Recall from \cite{AG} the definition of the chiral algebra of twisted differential operators on $G((t))$. Denote such algebra by $\mathcal{D}_{\hbar}$
(note that this notation differs from the one in \cite{AG} where the same object was denoted by $\mathcal{D}_{\kappa}$, with
$\kappa=\kappa_{crit}+\hbar\kappa_{kill}$). 
As it is explained there, the fiber  $\left(\mathcal{D}_{\hbar}\right)_x$ of $\DDH$ at $x\in X$ is isomorphic to
\[
 \left(\mathcal{D}_{\hbar}\right)_x\simeq U(\hat{\mathfrak{g}}_{\kappa})\underset{U(\g[[t]]\oplus \mathbbm{C})}{\otimes}\mathcal{O}_{G[[t]]}.
\]
Moreover $\mathcal{D}_{\hbar}$ comes equipped with two embeddings 
\begin{equation}\label{EM}
 \LAH\xrightarrow{l_{\hbar}}\DDH\xleftarrow{r_{\hbar}}\LAHH
\end{equation}
corresponding to left and right invariant vector fields on the loop group $G((t))$.
 In particular, for $\hbar=0$, we have $\LAH=\LAHH=\AL$ and $\DDH=\DDC$. Therefore we
obtain two different embeddings, $l:=l_0$ and $r:=r_0$ of $\AL$ into $\DDC$
\[
\AL\xrightarrow{l}\DDC\xleftarrow{r}\AL. 
\]
\noindent If we restrict these two embeddings to $\ZZ$, as it is explained in \cite{FG} Theorem 5.4, we have
\[
 l(\ZZ)=l(\AL)\cap r(\AL)=r(\ZZ).
\]
Moreover the two compositions
\[
 \ZZ\hookrightarrow \AL\xrightarrow{l}\DDC\xleftarrow{r}\AL\hookleftarrow \ZZ
\]
are intertwined by the automorphism $\eta: \ZZ\rightarrow \ZZ$ given by
 the involution of the Dynkin diagram that sends a weight $\lambda$ to $-w_0(\lambda)$ (i.e. when restricted to $\ZZ$ we 
have $l=r\circ \eta$).\\
\subsection{}The two embedding $l$ and $r$ of $\A_{crit}$ into $\DDC$ endow the fiber $(\DDC)_x$ with a structure of $\gc$-bimodule. The fiber can therefore be decomposed according to these actions as explained below.\\

Denote by $\hat{\mathfrak{Z}}_{crit}$ the center of the completion of the 
enveloping algebra of $\gc$. For a dominant weight $\lambda$, let $V^{\lambda}$ be the finite dimensional irreducible representation of 
$\g$ with highest weight $\lambda$ and let $\mathbbm{V}^{\lambda}_{\g,crit}$ be the $\gc$-module given by 
\[
 \mathbbm{V}^{\lambda}_{\g,crit}:=U(\gc)\underset{U(\g[[t]]\oplus\mathbbm{C})}{\otimes}V^{\lambda}.
\]
\noindent The action of the center $\hat{\mathfrak{Z}}_{crit}$ on $\mathbbm{V}_{\g,crit}^{\lambda}$ factors as follows
\[
\hat{\mathfrak{Z}}_{crit} \twoheadrightarrow \mathfrak{z}_{crit}^{\lambda}:=\End(\mathbbm{V}_{\g,crit}^{\lambda}). 
\]

\noindent Denote by $I^{\lambda}$ the kernel of the above map, and consider the
formal neighborhood of $\text{Spec}(\mathfrak{z}_{crit}^{\lambda})$ inside $\text{Spec}(\hat{\mathfrak{Z}}_{crit})$.
Let $\hat{\g}_{crit}$-mod$^{G[[t]]}$ be the full subcategory of $\gc$-modules such that the action of $\g[[t]]$ can be integrated to an action of $G[[t]]$.
We have the following Lemma.
\begin{lem}
Any module $M$ in $\gc$-mod$^{G[[t]]}$ can be decomposed into a direct sum of submodules
 $M_{\lambda}$ such that each $M_{\lambda}$ admits a filtration whose subquotients are annihilated by $I^{\lambda}$. 
\end{lem}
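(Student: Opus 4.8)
The plan is to realise $M$ as a filtered union of $\gc$-submodules, each annihilated by a finite product of the ideals $I^\lambda$, and then to pass to the limit using that the $I^\lambda$ are pairwise comaximal. Concretely, since the $G[[t]]$-action is smooth, $M$ is the union of its finite-dimensional $G[[t]]$-submodules $W$, and I would set $M_W:=U(\gc)\cdot W$; each $M_W$ is again an object of $\gc\text{-mod}^{G[[t]]}$, being $G[[t]]$-stable because $g\cdot(uw)=(\mathrm{Ad}_g u)(gw)$, and $M=\bigcup_W M_W$, so it suffices to decompose each $M_W$. The structural point about such a $W$ is that every irreducible finite-dimensional representation of $G[[t]]$ is pulled back from $G$ — the pro-unipotent group $\ker(G[[t]]\to G)$ acts on it with a nonzero fixed vector, and that fixed subspace, being $G[[t]]$-stable, is everything — equivalently $t\g[[t]]$ acts on it by zero. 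Choosing a composition series of $W$ as a $G[[t]]$-module therefore produces a filtration of $W$ by $\g[[t]]\oplus\mathbbm{C}\mathbbm{1}$-submodules (with $\mathbbm{1}$ acting by $1$) whose successive quotients are of the form $V^{\mu_i}$, $\mu_i$ dominant, with $t\g[[t]]$ acting by zero. Because $U(\gc)$ is free over $U(\g[[t]]\oplus\mathbbm{C}\mathbbm{1})$ by PBW, the functor $\mathrm{Ind}_{\g[[t]]\oplus\mathbbm{C}\mathbbm{1}}^{\gc}$ is exact, so $\mathrm{Ind}(W)$ is an iterated extension of the Weyl modules $\mathbbm{V}^{\mu_i}_{\g,crit}$; since each of these is annihilated by $I^{\mu_i}$ by the definition of $I^{\mu_i}$, the finite product $J_W:=\prod_i I^{\mu_i}\subset\hat{\mathfrak{Z}}_{crit}$ annihilates $\mathrm{Ind}(W)$, hence also its quotient $M_W$.

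The only nonformal ingredient is the disjointness, inside $\mathrm{Spec}(\hat{\mathfrak{Z}}_{crit})$, of the closed subschemes $\mathrm{Spec}(\mathfrak{z}^\lambda_{crit})$ for distinct dominant weights, i.e.\ the comaximality $I^\lambda+I^\mu=\hat{\mathfrak{Z}}_{crit}$ for $\lambda\neq\mu$. I would deduce it from the Feigin--Frenkel realisation of $\hat{\mathfrak{Z}}_{crit}$ as the algebra of functions on $\check{\g}$-opers on the punctured disc (\cite{FF}): under it $\mathrm{Spec}(\mathfrak{z}^\lambda_{crit})$ is the locus of opers with a regular singularity of a prescribed residue and trivial monodromy, $\mathrm{Spec}(\mathfrak{z}^0_{crit})$ being the regular opers, and the residue separates the dominant weights; this is the ``opers without monodromy'' analysis of \cite{FG}. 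Granting comaximality, all powers $(I^\lambda)^n$ are again pairwise comaximal, so $J_W=\bigcap_\nu (I^\nu)^{m_\nu}$, where $m_\nu$ is the multiplicity of $\nu$ among the $\mu_i$, and the Chinese Remainder Theorem identifies $\hat{\mathfrak{Z}}_{crit}/J_W$ with $\prod_\nu \hat{\mathfrak{Z}}_{crit}/(I^\nu)^{m_\nu}$. The resulting orthogonal idempotents split $M_W=\bigoplus_\nu (M_W)_\nu$ with $(M_W)_\nu$ annihilated by $(I^\nu)^{m_\nu}$, and the $I^\nu$-adic chain $(M_W)_\nu\supseteq I^\nu(M_W)_\nu\supseteq\cdots\supseteq(I^\nu)^{m_\nu}(M_W)_\nu=0$ is a filtration whose subquotients are annihilated by $I^\nu$.

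Finally, I would observe that the idempotents produced by the Chinese Remainder Theorem are unique, so the splittings of the various $M_W$ are canonical and hence compatible with the inclusions $M_W\subseteq M_{W'}$; passing to the union over $W$ gives $M=\bigoplus_\lambda M_\lambda$ with $M_\lambda:=\bigcup_W (M_W)_\lambda$, and inside each $M_\lambda$ the submodules $\{m:(I^\lambda)^N m=0\}$, $N\geq 0$, exhaust $M_\lambda$ and form a filtration whose successive quotients are annihilated by $I^\lambda$. The main obstacle is the disjointness statement, which genuinely uses the structure theory of the centre at the critical level; the remainder is formal manipulation with exactness of induction, comaximal ideals, and the Chinese Remainder Theorem.
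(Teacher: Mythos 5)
Your argument is correct. The paper itself states this lemma without proof (it is the standard decomposition of the category of $G[[t]]$-integrable modules at the critical level over the spectrum of the center, due essentially to Frenkel--Gaitsgory), so there is no in-text proof to compare against; your route --- exhausting $M$ by quotients of $\mathrm{Ind}_{\g[[t]]\oplus\mathbbm{C}}^{\gc}(W)$ for finite-dimensional $G[[t]]$-submodules $W$, filtering these by the Weyl modules $\mathbbm{V}^{\mu_i}_{\g,crit}$, and splitting via the Chinese Remainder Theorem --- is the standard one, and you correctly isolate the single nonformal input, namely the pairwise disjointness of the subschemes $\mathrm{Spec}(\mathfrak{z}^{\lambda}_{crit})$ inside $\mathrm{Spec}(\hat{\mathfrak{Z}}_{crit})$, which does require the Feigin--Frenkel/oper description of the center. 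The only point deserving an extra word is that $\hat{\mathfrak{Z}}_{crit}$ is a topological algebra, so comaximality $I^{\lambda}+I^{\mu}=\hat{\mathfrak{Z}}_{crit}$ should be checked at the level of the discrete quotient $\hat{\mathfrak{Z}}_{crit}/J_W$ actually acting on $M_W$; this is harmless but worth stating.
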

As a bimodule over $\gc$ the fiber at any point $x\in X$ of $\DDC$ is $G[[t]]$ integrable with respect to both actions, hence we have 
two direct sum decompositions of $\left(\mathcal{D}_{crit}\right)_x$ corresponding to the left and right action of $\gc$. These decompositions
coincide up to the involution $\eta$ and we have
\[
\left( \mathcal{D}_{crit}\right)_x=\underset{\lambda \,dominant}{\bigoplus}\left(\mathcal{D}_{crit}\right)_x^{\lambda},
\]
where $\left(\mathcal{D}_{crit}\right)_x^{\lambda}$ is the direct summand supported on the formal completion of 
$ \text{Spec}(\mathfrak{z}_{crit}^{\lambda})$.\\
Denote by $\DD$ the $\mathcal{D}_{X}$-module corresponding to $\left(\mathcal{D}_{crit}\right)_x^{0}$. It is easy to see that $\DD$ is in fact a 
chiral algebra.\\
Since the fiber of $\AL$ at $x$ is isomorphic to $\mathbbm{V}^{0}_{\g,crit}$, the embeddings 
$l$ and $r$ must land in the chiral algebra $\DD$. Hence we have 
\[
 \AL\xrightarrow{l,\,r}\DD\longhookrightarrow \DDC.
\]
The above two embeddings give $\DD$ a structure of $\AL$-bimodule, hence it makes sense to apply
 the functor of semiinfinite cohomology $\Psi$ to it twice (as it will explained in \ref{filtration1}).
 Let us denote by $\B$ the resulting chiral algebra
\[
 \B:=(\Psi\boxtimes \Psi)(\mathcal{D}_{crit}^0).
\]
The main result of this paper is the following.
\newtheorem{Theorema}{Theorem}
\begin{Theorema}\label{main}
 The chiral envelope $U(\Omega^c(\ZZ))$ of the extension 
$$0\rightarrow \ZZ\rightarrow\Omega^c(\ZZ)\rightarrow \Omega(\ZZ)\rightarrow 0,$$
given by the quantization $\{\mathcal{W}_{\hbar}:=\Psi(\A_{\hbar})\}$ of the center $\ZZ$,
is isomorphic to the chiral algebra $\B$.
\end{Theorema}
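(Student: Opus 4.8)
The plan is to establish the isomorphism by exploiting the equivalence of Theorem \ref{bbdd} together with the universal property of the chiral envelope in Definition \ref{chiral envelope}. First I would observe that, by Theorem \ref{bbdd}, to identify $\Omega^c(\ZZ)$ it suffices to identify the associated $\mathbbm{C}[\hbar]/\hbar^2$-deformation of the chiral-Poisson algebra $\ZZ$; this deformation is, by construction, the first-order truncation of the family $\{\WW = \Psi(\A_\hbar)\}$. So the chiral extension $\Omega^c(\ZZ)$ is the pull-back along $d:\ZZ\to\OO$ of a canonical extension built from $\WW \bmod \hbar^2$. The strategy is then to produce, on the other side, a map from $\Omega^c(\ZZ)$ into $\B$ and check it satisfies the hypotheses of the universal property, yielding $\overline{f}:U(\Omega^c(\ZZ))\to\B$, and then to prove this $\overline{f}$ is an isomorphism.

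The key steps, in order: (1) Recall the two embeddings $l, r: \AL \to \DD$ and note that on $\ZZ$ they agree up to $\eta$, so that $l(\ZZ)$ sits inside $\DD$ as a commutative chiral subalgebra with a central $\ZZ$-action on $\DD$. (2) Understand $\B = (\Psi\boxtimes\Psi)(\DD)$: since $\Psi$ is applied to each of the two commuting $\AL$-actions, and $\Psi(\A_\hbar)=\WW$, the bimodule structure of $\DD$ over $\AL$ becomes, after both reductions, a bimodule structure of $\B$ over $\ZZ$ (via $l$ and $r\circ\eta$, which coincide). One should show $\B$ receives a canonical map from the ``doubled'' first-order object, i.e. that the commutator of the left- and right-$\WW_\hbar$-lifts inside the $\hbar$-deformation of $\DD$ produces exactly the Poisson-type bracket defining $\OO$. (3) Extract from $\DDH \bmod \hbar^2$ — equivalently from $\DD$ together with the infinitesimal deformation coming from $\kappa_{crit}+\hbar\kappa_{kill}$ — a $\DX$-module surjecting onto $\OO$ with kernel $\ZZ$, i.e. a candidate for $\Omega^c(\ZZ)$, sitting inside $\B$; concretely, take the image in $\B$ of the difference $l_\hbar - r_\hbar$ applied to lifts of central elements, divided by $\hbar$. (4) Verify the Lie$^*$ bracket and chiral $\ZZ$-module axioms of Definition \ref{chiral algebroid} for this candidate, using that the left and right actions on $\DD$ commute and that their $\hbar$-leading commutator is the Poisson bracket of $\ZZ$ as in the first bullet of the Poisson-structure description. (5) Invoke the universal property to get $\overline f: U(\Omega^c(\ZZ))\to\B$ and, conversely, construct an inverse by mapping generators of $\B$ back, using that $\B$ is generated as a chiral algebra by the images of $l$, $r$ and the reduction of $\DD$.

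The final step is a dimension/fiber count: compute the fiber $(\B)_x$ via the bimodule decomposition of $(\DDC)_x$ and the known behavior of $\Psi$ on the modules $\mathbbm{V}^\lambda_{\g,crit}$ — only the $\lambda=0$ summand $\DD$ contributes, and $\Psi\boxtimes\Psi$ of it should match $\hat{\mathfrak{Z}}_{crit}$-worth of data on both sides — and compare with the fiber of $U(\Omega^c(\ZZ))_x$, which by the chiral-envelope construction is the enveloping algebra of the corresponding Lie algebroid extension over $\zz:=\End(\mathbbm{V}^0_{\g,crit})$. If both fibers are identified compatibly with the filtrations (the PBW-type filtration on the chiral envelope versus the filtration on $\B$ inherited from the $\hbar$-adic or order filtration on $\DDH$), then $\overline f$ is an isomorphism on fibers, hence an isomorphism of chiral algebras.

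I expect the main obstacle to be Step (2)–(4): showing that applying $\Psi$ to each side of the $\AL$-bimodule $\DD$ really does turn the first-order deformation datum of $\DD$ (as a bimodule, coming from $\DDH$) into precisely the chiral $\ZZ$-extension $\Omega^c(\ZZ)$ attached to the quantization $\WW$. This requires compatibility of semi-infinite cohomology with the relevant pull-back and extension constructions, and care that $\Psi$ (being a derived functor) does not introduce higher cohomology that would spoil the short exact sequences — one needs a vanishing statement, presumably available from \cite{FF, FG}, that $\Psi$ is exact on the subcategories in play (e.g. on $G[[t]]$-integrable modules, or at least on $\DD$ and its deformation). Reconciling the factor of $2$ flagged in the footnote (the extension quantizes $(\ZZ, 2\{\,,\,\})$) with the normalization coming from the two copies of $\Psi$ is a subtlety that must be tracked explicitly.
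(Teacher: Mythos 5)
Your overall strategy matches the paper's: the heart of the construction is indeed the element $\tfrac{1}{2}\bigl(l_{\hbar}(z_{\hbar})-r_{\hbar}(\eta(e(z_{\hbar})))\bigr)/\hbar \pmod{\hbar}$ (you omit the involution $\eta$ and the sign flip $e:\WW\rightarrow \mathcal{W}_{-\hbar}$, but you correctly note that $l$ and $r$ agree on $\ZZ$ only up to $\eta$, so this is a normalization you would recover), and the passage from a map $\Omega^c(\ZZ)\rightarrow \B$ to the isomorphism of chiral envelopes does go through the universal property plus a filtration comparison. However, there are two genuine gaps. First, your mechanism for proving that $\overline{f}$ is an isomorphism does not work as stated. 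The filtration on $\B$ that makes the comparison possible is \emph{not} the $\hbar$-adic or order filtration on $\DDH$; it is the PBW filtration coming from the identification (Theorem \ref{REN}, due to Frenkel--Gaitsgory) of $\DD$ with the chiral envelope $U\bigl((\A_{crit}\underset{\ZZ}{\otimes}\A_{crit}),\,\A^{ren,\tau}\bigr)$ of the renormalized chiral algebroid, transported through $\Psi\boxtimes\Psi$. It is precisely this input that yields $\text{gr}_{\cdot}\B\simeq \text{Sym}^{\cdot}_{\ZZ}\Omega^1(\ZZ)$ and lets one argue, degree by degree (using that both filtrations are generated by their first two terms), that a map which is the identity on $\text{gr}_0=\ZZ$ and $\text{gr}_1=\Omega^1(\ZZ)$ is an isomorphism. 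Neither a ``fiber count'' nor ``constructing an inverse on generators'' substitutes for this: without the renormalized algebroid you have no a priori upper bound on the size of $\B$, so surjectivity of $\overline{f}$ in each filtration degree is exactly what you cannot check.

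Second, you flag as the ``main obstacle'' the identification of your candidate sub-extension of $\B$ with the extension $\Omega^c(\ZZ)$ classified by the quantization $\{\WW\}$ under Theorem \ref{bbdd}, but you do not resolve it, and this is where most of the actual work lies. Theorem \ref{bbdd} is an abstract equivalence and gives no handle on $\Omega^c(\ZZ)$; one needs an explicit model. The paper builds one for an arbitrary chiral-Poisson algebra $\R$ with quantization $\R_{\hbar}$: form the distinguished chiral extension $\text{Ind}_{\R}(\R^c)$ of $\R\otimes\R^c$ attached to the Lie$^*$ algebra $\R^c=\R_{\hbar}/\hbar^2$, pass to $\INDR$ by identifying the two copies of $\R$, and then quotient by an explicit Leibniz relation $\alpha_1-\alpha_2-\alpha_3$ to land on $\Omega^1(\R)$; one then verifies that the pull-back along $d:\R\rightarrow\Omega^1(\R)$ recovers the original sequence $0\rightarrow\R\rightarrow\R^c\rightarrow\R\rightarrow 0$. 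Only with this presentation in hand can one check that the formula for $f$ descends: the relations of $\INDZC$ are killed because $l_{\hbar}$ and $r_{\hbar}$ centralize each other in $(\Psi\boxtimes\Psi)(\mathcal{D}_{\hbar})$, and the Leibniz relation is killed by an $\hbar$-torsion-freeness argument splitting each $\hbar\alpha_i'$ into two terms that cancel in pairs. Your step (4) gestures at these verifications but the required presentation of $\Omega^c(\ZZ)$ by generators and relations, without which there is nothing to verify against, is absent from the proposal.
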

\subsection{}{\scshape Structure of the proof.} The proof of Theorem \ref{main} will be organized as follows: in Section \ref{reformulation}
we will give an alternative formulation of the Theorem that consists in finding a map $F$ from $\Omega^c(\ZZ)$ to $\B$ with some
particular properties. The definition
of the above map will rely on the explicit construction of the chiral extension $\Omega^c(\ZZ)$ that will be given in Section \ref{conomega}.
In Section \ref{mapF} we will finally define the map $F$ and conclude the proof of the Theorem.
\subsection{}{\scshape   Acknowledgements.}
The author would like to thank her advisor Dennis Gaitsgory for suggesting the problem, for  his guidance through this work and for the infinitely many helpful conversations.  The author is also grateful to M. Artin and D. Gaitsgory for their help in improving the exposition of this work. 
The author would also like to thank Andrea Appel, Salvatore Stella, Sam Raskin and Sasha Tsymbaliuk for their help and support.
\section{Reformulation of the Theorem}\label{reformulation}
 
\subsection{}
In this section we will show how to prove the Theorem \ref{main} assuming the existence of a map $F$ from $\Omega^c(\ZZ)$ to $\B$. In order to do
so, we will use the fact that both $U(\Omega^c(\ZZ))$ and $\B$ can be equipped with filtrations as explained below.
\subsection{}
\noindent  The chiral algebra $U(\Omega^c(\ZZ))$, being the chiral envelope of the extension
\[
 0\rightarrow \ZZ\rightarrow \Omega^c(\ZZ)\rightarrow \Omega^1(\ZZ)\rightarrow 0,
\]
has its standard \emph{Poincar\'{e}-Birkhoff-Witt filtration}. In fact, more generally, given a chiral-extension $(\R,\C,\LI^c,\LI)$, using the notations from Definition \ref{chiral envelope},
we can define a PBW filtration on $U(\C,\LI^c)$ as the filtration  generated by $U(\C,\LI^c)_{0}:=\phi^c(\C)$ and
 
\begin{eqnarray*}
U(\C,\LI^c)_1:=\text{Im}(j_*j^*(\LI^c\boxtimes \C)\xrightarrow{\phi^c\boxtimes \phi^c|_{\C}}\\
\rightarrow j_*j^*(U(\C,\LI^c)\boxtimes U(\C,\LI^c))\rightarrow \Delta_!(U(\C,\LI^c))).
\end{eqnarray*}

\vspace{0.2cm}
Moreover in \cite{BD} 3.9.11. the following theorem is proved.
\begin{thm}\label{GR}
 If $\R$ and $\C$ are $\mathcal{O}_{X}$ flat and $\LI$ is a flat $\R$-module then we have an isomorphism
\[
 \C\underset{\R}{\otimes}\text{Sym}_{\R}^{\cdot}\LI\overset{\simeq}{\rightarrow}\text{gr}_{\cdot}U(\C,\LI^c).
\]
\end{thm}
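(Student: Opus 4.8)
The plan is to construct a canonical morphism
\[
p\colon \C\underset{\R}{\otimes}\operatorname{Sym}_{\R}^{\cdot}\LI\longrightarrow \operatorname{gr}_{\cdot}U(\C,\LI^c),
\]
check that it is surjective for formal reasons, and then prove that it is injective by exhibiting a chiral $U(\C,\LI^c)$-module whose symbol is exactly $\C\otimes_{\R}\operatorname{Sym}_{\R}^{\cdot}\LI$. The $\mathcal{O}_X$-flatness of $\R$ and $\C$ is used throughout so that the short exact sequences $0\to\M\boxtimes\mathcal{N}\to j_*j^*(\M\boxtimes\mathcal{N})\to\Delta_!(\M\otimes\mathcal{N})\to 0$ recalled above remain exact, and so that $\operatorname{gr}$ and $\operatorname{Sym}_{\R}$ behave as expected.

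\emph{Construction of $p$ and surjectivity.} In filtration degree $0$ the composite $\C\xrightarrow{\phi^c}\LI^c\to U(\C,\LI^c)$ lands in $U(\C,\LI^c)_0$ and is injective; as in the classical case this follows by letting $U(\C,\LI^c)$ act on $\C$ itself (the regular chiral action of $\C$ together with the adjoint action of $\LI^c$ on $\C\subset\LI^c$ satisfies the axioms of Definition \ref{chiral envelope}, hence defines a chiral $U(\C,\LI^c)$-module structure on $\C$ by the module form of the universal property, see \cite{BD}). Thus $\operatorname{gr}_0 U(\C,\LI^c)=\C$. In degree $n\ge 1$, composing iterated chiral products of elements of $\phi^c(\LI^c)$ with the projection onto $\operatorname{gr}_n U(\C,\LI^c)$ yields an operation which, by the $*$-operation axiom of Definition \ref{chiral algebroid} in its $\C$-version together with the compatibility of the chiral $\C$-module structure with the Lie$^*$ action, descends to $\C\otimes_{\R}\LI^{\otimes_{\R}n}$; and since the Lie$^*$ bracket of $\LI^c$ takes values in $\LI^c$, which sits in filtration degree $\le 1$, the usual ``commutator is of lower order'' argument shows this operation is symmetric in the $\LI$-factors, so it descends further to $\C\otimes_{\R}\operatorname{Sym}_{\R}^{n}\LI$. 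This defines $p$. Finally $U(\C,\LI^c)$ is generated as a chiral algebra by $\phi^c(\LI^c)$ — apply the universal property of the chiral envelope to the chiral subalgebra it generates — so $p$ is surjective.

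\emph{Injectivity.} One reduces to the case in which $\LI$ is a free $\R$-module: the statement is local on $X$, and the functors $U(\C,-)$, $\operatorname{gr}_{\cdot}$, $\operatorname{Sym}_{\R}^{\cdot}$ and $\C\otimes_{\R}(-)$ all commute with filtered colimits, so the flatness of $\LI$ over $\R$ allows us to assume $\LI=\bigoplus_{i\in I}\R\cdot e_i$ for an ordered index set $I$ (this is the only place the flatness of $\LI$ enters). Fix a $\DX$-linear section $\sigma\colon\LI\to\LI^c$ of the extension (\ref{AAAA}). On $M:=\C\otimes_{\R}\operatorname{Sym}_{\R}^{\cdot}\LI$ I would define a chiral $\LI^c$-module structure in which $\C$ acts through its chiral product and each $\sigma(e_i)$ acts as ``multiplication by $e_i$ plus lower-order corrections'', where the corrections are prescribed by the Lie$^*$ bracket $[\sigma(e_i),\sigma(e_j)]_{\LI^c}$, by the derivation $\tau_{\LI,\R}(e_i)$ of $\C$, and by the $\C$-valued $*$-cocycle measuring the failure of $\sigma$ to be a morphism of Lie$^*$ algebras. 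Granting that this is a genuine chiral $\LI^c$-module structure, compatible in the sense of Definition \ref{chiral envelope}, the module form of the universal property upgrades $M$ to a chiral $U(\C,\LI^c)$-module; this action respects the PBW filtration, and the induced action of $\operatorname{gr}_{\cdot}U(\C,\LI^c)$ sends $p(e_{i_1}\cdots e_{i_n})$, for $i_1\le\cdots\le i_n$, to multiplication by $e_{i_1}\cdots e_{i_n}\in\operatorname{Sym}_{\R}^{n}\LI\subseteq M$. Since these elements are $\C$-linearly independent, $p$ is injective, and hence an isomorphism.

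\emph{The main obstacle.} The crux is the step taken for granted above: that the symbol action on $M$ really is a chiral $\LI^c$-module structure, i.e.\ that its chiral operation obeys the Jacobi identity. This is the chiral counterpart of the ``all terms cancel'' computation in Rinehart's proof of the classical PBW theorem for Lie--Rinehart algebras, now carried out with $j_*j^*$ and $\Delta_!$ in place of ordinary tensor products and made heavier by the fact that $\sigma$ is not a morphism of Lie$^*$ algebras, so that its cocycle enters the formula for the action; the axioms of a chiral $\C$-extension (Definition \ref{chiral algebroid}) together with the Jacobi identity for the chiral product on $\C$ are exactly what is needed for the cancellations. An alternative that replaces the explicit cocycle by a cohomological statement is to present $U(\C,\LI^c)$ by generators and quadratic--linear relations and invoke the chiral form of the Braverman--Gaitsgory PBW criterion, reducing injectivity of $p$ to the vanishing of a single obstruction class; this reorganizes, rather than removes, the bookkeeping. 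The full argument is carried out in \cite{BD}, 3.9.11.
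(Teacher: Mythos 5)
The paper does not prove Theorem \ref{GR} at all: it is quoted verbatim from \cite{BD} 3.9.11, so there is no internal argument to compare yours against. Your outline is the standard Rinehart-style PBW strategy (symbol map, surjectivity from generation by $\phi^c(\LI^c)$, injectivity by exhibiting a faithful filtered module), and the architecture is sound; the surjectivity half and the identification $\operatorname{gr}_0 U(\C,\LI^c)=\C$ via the regular action are fine modulo the same issue I raise below.

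The genuine gap is the one you yourself flag, and it is not a peripheral verification but the entire mathematical content of the theorem: you must show that the prescription ``$\C$ acts by its chiral product and $\sigma(e_i)$ acts by multiplication by $e_i$ plus lower-order corrections'' actually defines a chiral $\LI^c$-module structure on $\C\otimes_{\R}\operatorname{Sym}^{\cdot}_{\R}\LI$. In the chiral setting this is harder than you make it sound, for two reasons. First, the action is not an endomorphism of $M$ but a map $j_*j^*(\LI^c\boxtimes M)\to\Delta_!(M)$, and the $*$-operation axiom of Definition \ref{chiral algebroid} forces the $\R$-component of $\LI^c$ to act through $-i\circ\sigma\circ\tau\circ\sigma$ rather than centrally, so the ``corrections'' are not optional data you can prescribe freely --- they are pinned down by the axioms, and the Jacobi identity must then be checked against the cocycle of the non-Lie$^*$ section $\sigma$. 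Second, your reduction to free $\LI$ via Lazard needs the chosen section and ordered basis to be compatible with the $\DX$-module structure, which is an extra constraint absent from the classical case. Without carrying out this computation (or the Braverman--Gaitsgory-style obstruction argument you mention as an alternative), the proof is not self-contained: deferring the decisive step to \cite{BD} 3.9.11 means you have reproduced the statement's provenance, not its proof.
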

By applying the above to the case where $\C=\R=\ZZ$ and the extension of $\LI=\Omega^1(\ZZ)$ given by $\LI^c=\Omega^c(\ZZ)$ we get
\[
 \text{gr}_{\cdot}U(\Omega^c(\ZZ))\simeq \text{Sym}^{\cdot}_{\ZZ}\Omega^1(\ZZ).
\]
\subsection{}\label{filtration1} The filtration on $\B$ is defined using the functor $\Psi$ of 
\emph{semi-infinite cohomology} introduced in \cite{FF}.\\

Recall that, for any central charge $\kappa=\hbar\kappa_{kill}+\kappa_{crit}$,
the functor $\Psi$ assigns to a chiral $\A_{\hbar}$-module a $\Psi(\A_{\hbar})=\WW$-module. 
In particular, for every chiral algebra $\mathcal{B}$, and every morphism of chiral algebras $\phi: \A_{\hbar}\rightarrow \mathcal{B}$ we have 
\[
\Psi:\left\{\begin{array}{c}\text{chiral algebra morphism }\\ \phi:\A_{\hbar}\rightarrow \mathcal{B}\end{array} \right\}\rightarrow \left\{\begin{array}{c}\text{chiral algebra morphism }\\ \Psi(\phi): \WW\rightarrow \Psi(\mathcal{B})\end{array} \right\}. 
\]
\noindent Moreover recall that for $\hbar=0$ we have $\Psi(\A_{crit})\simeq \ZZ.$

\vspace{0.3 cm}
\noindent As it is explained in \cite{FG}, the chiral algebra $\B$ can be described as  
\[
(\Psi\boxtimes \Psi)(U(\C,\LI^c))\xrightarrow{\sim}\B=(\Psi\boxtimes \Psi)(\mathcal{D}^{0}_{crit}),
\]
for some particular chiral algebra $\C$ and chiral extension $\LI^c$. Hence it carries a canonical filtration induced by the PBW-filtration on $U(\C,\LI^c)$. 
We will recall below the definitions of these algebras.\\

\vspace{0.1cm}
\noindent {\scshape The renormalized chiral algebroid.} Recall that \cite{FG} Proposition 4.5. shows the existence of a chiral extension $\A^{ren,\tau}$
 that fits into the following exact sequence
\[
 0\rightarrow (\A_{crit}\underset{\ZZ}{\otimes} \A_{crit})\rightarrow \A^{ren,\tau}\rightarrow \Omega^1(\ZZ)\rightarrow 0,
\]
which is a chiral extension of $(\A_{crit}\underset{\ZZ}{\otimes} \A_{crit}) $ in the sense we introduced in Definition 
\ref{chiral algebroid}. In particular, if we consider the chiral envelope $U((\A_{crit}\underset{\ZZ}{\otimes} \A_{crit}), \A^{ren,\tau})$, by Theorem \ref{GR} we have
\begin{eqnarray*}
 &\text{gr}_{\cdot}(U((\A_{crit}\underset{\ZZ}{\otimes} \A_{crit}),\,\A^{ren,\tau}))\simeq \\
&\simeq( \AL\underset{\ZZ}{\otimes} \AL )\underset{\ZZ}{\otimes}\text{Sym}^{\cdot}_{\ZZ}(\Omega^1(\ZZ)).
\end{eqnarray*}
The chiral envelope $U((\A_{crit}\underset{\ZZ}{\otimes} \A_{crit}), \A^{ren,\tau})$ is closely related to the chiral algebra $\mathcal{D}^0_{crit}$, in fact in \cite{FG} the following is proved:
\begin{thm}\label{REN}
We have an embedding $G$ of the chiral extension $\A^{ren,\tau}$ into $\DDC$ such that the maps $l$ and $r$ are the compositions of this embedding 
with the canonical maps
\[
 \AL\rightrightarrows (\A_{crit}\underset{\ZZ}{\otimes} \A_{crit})\xrightarrow{G} \A^{ren,\tau}.
\]
The embedding extends to a homomorphism of chiral algebras $$U((\A_{crit}\underset{\ZZ}{\otimes} \A_{crit}),\, \A^{ren,\tau})\rightarrow \DDC$$ and the latter is an isomorphism into $\DD$.
\end{thm}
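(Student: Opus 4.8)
The plan is to build both $\A^{ren,\tau}$ and the embedding $G$ out of the first-order behaviour at $\hbar=0$ of the $\mathbbm{C}[[\hbar]]$-family of chiral algebras deforming $\DDC$, whose generic fibre is $\DDH$ (call it $\mathcal{D}_{[\hbar]}$), equipped with the $\mathbbm{C}[[\hbar]]$-linear extensions $l_\hbar,r_\hbar$ of the left and right embeddings of (\ref{EM}). At $\hbar=0$ these specialise to $l,r\colon\AL\to\DDC$, whose images mutually chiral-commute and which, by \cite{FG} Theorem 5.4, satisfy $l(\ZZ)=l(\AL)\cap r(\AL)=r(\ZZ)$, the two identifications of this common subalgebra with $\ZZ$ differing by $\eta$. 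The first step is to observe that, after matching the two $\ZZ$-actions by means of $\eta$, the pair $(l,r)$ induces a morphism of chiral algebras $\AL\underset{\ZZ}{\otimes}\AL\to\DDC$; this is taken to be $G$ on the sub-object $\AL\underset{\ZZ}{\otimes}\AL\subset\A^{ren,\tau}$, and then the two composites $\AL\rightrightarrows\AL\underset{\ZZ}{\otimes}\AL\xrightarrow{G}\DDC$ are exactly $l$ and $r$, as required.

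Next I would define $G$ on the quotient $\Omega^1(\ZZ)$ by differentiating the family. Given a local section $a$ of $\ZZ$, choose a flat lift $\tilde a_\hbar$ of $a$ in the family underlying $l_\hbar$ and a flat lift $\tilde a'_\hbar$ of $\eta(a)$ in the family underlying $r_\hbar$; since $a$ is central in $\AL$, the images $l_\hbar(\tilde a_\hbar)$ and $r_\hbar(\tilde a'_\hbar)$ in $\mathcal{D}_{[\hbar]}$ agree modulo $\hbar$, and one sets
\[
G(da):=\tfrac1\hbar\bigl(l_\hbar(\tilde a_\hbar)-r_\hbar(\tilde a'_\hbar)\bigr)\big|_{\hbar=0}\in\DDC.
\]
I would then verify: two flat lifts of $a$ differ by $\hbar$ times a section of the deforming family, so changing the lifts alters $G(da)$ by an element of $l(\AL)\subset\AL\underset{\ZZ}{\otimes}\AL$ --- exactly the indeterminacy of lifting $da$ to $\A^{ren,\tau}$ --- so $G$ is a well-defined map of $\DX$-modules on $\A^{ren,\tau}$; it is additive and $\ZZ$-linear modulo $\AL\underset{\ZZ}{\otimes}\AL$, hence factors through $d\colon\ZZ\to\Omega^1(\ZZ)$; and, reducing the chiral bracket and the chiral $\ZZ$-action of $\mathcal{D}_{[\hbar]}$ modulo $\hbar$, one sees that $G$ respects the Lie$^*$ bracket, the chiral $\ZZ$-module structure and the action $\tau$, i.e.\ that $G\colon\A^{ren,\tau}\to\DDC$ is a morphism of chiral extensions in the sense of Definition \ref{chiral algebroid}. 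It is injective because it realises the sequence $0\to\AL\underset{\ZZ}{\otimes}\AL\to\A^{ren,\tau}\to\Omega^1(\ZZ)\to0$ inside $\DDC$. (Conceptually this is exactly what one expects: by the relative form of the equivalence of Theorem \ref{bbdd}, the first-order deformation $\mathcal{D}_{[\hbar]}$ of $\AL\underset{\ZZ}{\otimes}\AL$ corresponds to a chiral $\ZZ$-extension of $\Omega^1(\ZZ)$, and that extension is $\A^{ren,\tau}$ --- this is how \cite{FG} Proposition 4.5 produces it.)

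To pass to chiral envelopes I would apply the universal property of the chiral envelope (Definition \ref{chiral envelope}): $G$ is a Lie$^*$ morphism, restricts to the chiral-algebra morphism $\AL\underset{\ZZ}{\otimes}\AL\to\DDC$ above, and is a morphism of $\AL\underset{\ZZ}{\otimes}\AL$-modules, so it extends uniquely to $\overline G\colon U\bigl((\AL\underset{\ZZ}{\otimes}\AL),\,\A^{ren,\tau}\bigr)\to\DDC$. To see that $\overline G$ is an isomorphism onto $\DD$ I would compare filtrations. By Theorem \ref{GR},
\[
\mathrm{gr}_{\cdot}\,U\bigl((\AL\underset{\ZZ}{\otimes}\AL),\,\A^{ren,\tau}\bigr)\;\simeq\;(\AL\underset{\ZZ}{\otimes}\AL)\underset{\ZZ}{\otimes}\mathrm{Sym}^{\cdot}_{\ZZ}\Omega^1(\ZZ),
\]
while the order filtration on $\DDC$ restricts to a filtration on $\DD$ whose associated graded --- from the description of chiral differential operators on $G((t))$ in \cite{AG} --- is the algebra of functions on the relevant cotangent space and is identified with the same symmetric algebra. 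The map $\overline G$ is filtered, and in degrees $0$ and $1$ it is respectively the inclusion $\AL\underset{\ZZ}{\otimes}\AL\hookrightarrow\DD$ and the identification $\Omega^1(\ZZ)\xrightarrow{\sim}(\text{first-order symbols of }\DD)$ constructed in the previous step; since $\mathrm{gr}\,\overline G$ is an algebra map out of a symmetric algebra generated in degrees $\le 1$ and is the identity on those generators, it is an isomorphism, and hence so is $\overline G$, with image $\DD$.

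The hard part will be the one genuinely non-formal input of the last step: identifying the associated graded of $\DD$ with $(\AL\underset{\ZZ}{\otimes}\AL)\underset{\ZZ}{\otimes}\mathrm{Sym}^{\cdot}_{\ZZ}\Omega^1(\ZZ)$ --- equivalently, showing that the first-order symbols of the $0$-block of chiral differential operators on $G((t))$ are governed precisely by the Kähler differentials $\Omega^1(\ZZ)$ over $\AL\underset{\ZZ}{\otimes}\AL$. This is the deformed counterpart of $l(\ZZ)=l(\AL)\cap r(\AL)$, and establishing it --- together with the careful bookkeeping of the Dynkin-diagram involution $\eta$ on the two $\gc$-actions --- is where the real content lies; granting it, the well-definedness of $G$, the universal property and the graded comparison are all formal.
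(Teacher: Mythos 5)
You should first be aware that the paper does not prove Theorem \ref{REN} at all: it is imported verbatim from \cite{FG} (the sentence immediately preceding it reads ``in \cite{FG} the following is proved''), so there is no in-paper proof to compare against. Judged on its own terms, your sketch is a faithful reconstruction of the \cite{FG} strategy, and it is consistent with the one place where this paper does reveal the definition of $G$: in \ref{end} the composition $\ZZ\xrightarrow{d}\Omega^1(\ZZ)\rightarrow(\B)_1/\ZZ$ is stated (citing \cite{FG} 5.5) to be $z\mapsto \frac{1}{2}\frac{l_{\hbar}(z_{\hbar})-r_{\hbar}(\eta(e(z_{\hbar})))}{\hbar}\pmod\hbar$, which is your formula for $G(da)$ up to the factor $\frac12$. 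That factor is not cosmetic: as the footnote in \ref{VVV} records, the unnormalized first-order difference quantizes $2\{\,,\,\}$ rather than $\{\,,\,\}$, so without the $\frac12$ your $G$ would be a chiral extension of $\Omega^1(\ZZ)$ for the wrong Lie$^*$ algebroid structure and would not match the $\A^{ren,\tau}$ of \cite{FG} Proposition 4.5. Also, the indeterminacy in your lifts is $l(\AL)+r(\AL)$, not just $l(\AL)$ (you must vary both $\tilde a_\hbar$ and $\tilde a'_\hbar$); this is still contained in the image of $\AL\underset{\ZZ}{\otimes}\AL$, so well-definedness survives, but the statement as written is off.

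The genuine gap is the one you yourself flag and then defer: the identification $\mathrm{gr}_{\cdot}\DD\simeq(\AL\underset{\ZZ}{\otimes}\AL)\underset{\ZZ}{\otimes}\mathrm{Sym}^{\cdot}_{\ZZ}\Omega^1(\ZZ)$. This does not follow from ``the order filtration on $\DDC$'' plus the \cite{AG} symbol calculus in any routine way --- the relevant filtration on $\DD$ is generated by the images of $\AL\underset{\ZZ}{\otimes}\AL$ and of the renormalized first-order elements, and proving that its associated graded is the stated symmetric algebra (equivalently, that $l(\AL)$ and $r(\AL)$ together with the elements $\frac{1}{\hbar}(l_\hbar-r_\hbar\circ\eta\circ e)$ generate $\DD$, with symbols exactly $\Omega^1(\ZZ)$) is the main content of \cite{FG} Theorems 5.4--5.5, resting on the Feigin--Frenkel description of $\ZZ$ and of $\Omega^1(\ZZ)$ as the isotropy algebroid of opers. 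Since everything else in your argument (well-definedness of $G$, the universal property of the chiral envelope, the filtered comparison in degrees $0$ and $1$, induction via Theorem \ref{GR}) is formal once that input is granted, your proposal is best read as a correct reduction of Theorem \ref{REN} to the substantive results of \cite{FG}, rather than an independent proof.
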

\noindent Therefore we see that $\B$ is given by applying the functor $\Psi\boxtimes \Psi$ to the chiral envelope $U(\C,\LI^c),$ for 
$$\C=(\A_{crit}\underset{\ZZ}{\otimes} \A_{crit}),\,\,\, \text{and}\,\,\,\LI^c=\A^{ren,\tau}.$$ 
In particular, since the functor $\Psi$
is exact, we obtain a filtration on $\B$ induced from the PBW-filtration on $U(\AL\otimes \AL, \A^{ren,\tau})$ such that
\[
 \text{Sym}^{\cdot}_{\ZZ}\Omega^1(\ZZ)\xrightarrow{\sim}\text{gr}_{\cdot}\B,
\]
where we used the fact that $\Psi(\A_{crit})\simeq \ZZ$.
\subsection{}\label{embedding}
 Note that if we apply the functor $\Psi$ to the two embeddings in (\ref{EM}), we obtain two embeddings 
\[
\mathcal{W}_{\hbar}\xrightarrow{l_{\hbar}}(\Psi\boxtimes \Psi)(\mathcal{D}_{\hbar})\xleftarrow{r_{\hbar}}\mathcal{W}_{-\hbar}
\]
such that $l:=l_0=r_0\circ \eta=:r\circ \eta$, 
where we are denoting simply by $l_{\hbar}$ and $r_{\hbar}$ the maps $\Psi(l_{\hbar})$ and $\Psi(r_{\hbar})$ respectively.
In particular, for $\hbar=0$, we obtain two 
embeddings $l$ and $r$
of $\ZZ$ into $(\Psi\boxtimes \Psi)(\mathcal{D}_{crit})$ that differs by $\eta$. Moreover the image of the two 
maps lands in $\B$, therefore we obtain two embeddings
\[
\ZZ\xrightarrow{l}\B\xleftarrow{r}\ZZ. 
\]
From the above construction it is clear that $\ZZ$ corresponds to the 0-th part of the filtration
defined on $\B$. Moreover, by the definition of the map $G$ from Theorem \ref{REN} (see \cite{FG}), the embedding
 $\ZZ\hookrightarrow \B$ induced by the inclusion $(\A_{crit}\underset{\ZZ}{\otimes} \A_{crit})\hookrightarrow U((\A_{crit}\underset{\ZZ}{\otimes} \A_{crit}), \A^{ren,\tau})$ under $\Psi\boxtimes \Psi$,  coincides with $l$.

\subsection{} Suppose now that we are given a map  $F:\Omega^c(\ZZ)\rightarrow \B$ satisfying the conditions 
stated in Definition \ref{chiral envelope}.
 By the universal property of the chiral envelope, we automatically get
a map
\[
 U(\Omega^c(\ZZ))\rightarrow \B.
\]
Clearly not every such map will induce an isomorphism between the two chiral algebras.
Theorem \ref{main} can be reformulated as saying that there exists a map as above, 
that gives rise to an isomorphism $U(\Omega^c(\ZZ))\xrightarrow{\sim} \B$.
More precisely we have the following: 

\begin{Theorema}\label{main2}
 There exists a map $F:\Omega^c(\ZZ)\rightarrow (\B)_1\hookrightarrow \B$ compatible with the $\ZZ$ structure on both sides that restricts to the embedding $l$ of chiral algebras
 on $\ZZ$
such that the following diagram commutes 
\[
 \xymatrix{\Omega^c(\ZZ)/\ZZ\ar[rr]^{F}&&(\B)_1/\ZZ.\\
&\Omega^1(\ZZ)\ar[ul]^{\simeq}\ar[ur]_{\simeq}&}
\]
\end{Theorema}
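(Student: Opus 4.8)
The plan is to construct $F$ directly on the explicit model of $\Omega^c(\ZZ)$ that will be built in Section \ref{conomega}, and then check the three required properties (compatibility with the $\ZZ$-action, restriction to $l$ on $\ZZ$, and compatibility with the identification of $\Omega^c(\ZZ)/\ZZ$ and $(\B)_1/\ZZ$ with $\Omega^1(\ZZ)$) against the corresponding structures on $\B$. First I would recall that $(\B)_1$, being the degree-$\le 1$ part of the PBW filtration on $\B=(\Psi\boxtimes\Psi)(U(\AL\otimes_{\ZZ}\AL,\A^{ren,\tau}))$, sits in an exact sequence $0\to\ZZ\to(\B)_1\to\Omega^1(\ZZ)\to 0$, by Theorem \ref{GR} applied to $\C=\R=\ZZ$ after applying the exact functor $\Psi\boxtimes\Psi$ and using $\Psi(\AL)\simeq\ZZ$; this is already an $\R$-extension of $\Omega^1(\ZZ)$ in the sense of Definition \ref{lieextension}, and in fact a chiral $\ZZ$-extension, because the chiral $\ZZ$-module structure on $U(\AL\otimes_{\ZZ}\AL,\A^{ren,\tau})$ coming from the left copy of $\AL$ descends through $\Psi$. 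So both $\Omega^c(\ZZ)$ and $(\B)_1$ are chiral $\ZZ$-extensions of $\Omega^1(\ZZ)$, and by Theorem \ref{bbdd} (the Beilinson--Drinfeld equivalence $\mathcal{P}^{ch}(\OO)\simeq\mathcal{Q}^{ch}(\R)$) it suffices to exhibit an isomorphism of the associated first-order deformations of the chiral-Poisson algebra $\ZZ$; once we have that, the induced isomorphism of chiral $\ZZ$-extensions is the desired $F$, and the triangle over $\Omega^1(\ZZ)$ commutes by construction.

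Concretely, the first-order deformation of $\ZZ$ attached to $(\B)_1$ is obtained by pulling back $0\to\ZZ\to(\B)_1\to\Omega^1(\ZZ)\to 0$ along $d:\ZZ\to\Omega^1(\ZZ)$; I claim this pullback is canonically the first-order deformation $\WW=\Psi(\A_{\hbar}) \pmod{\hbar^2}$ of $\ZZ$. The key point is that the chiral bracket on $\WW$ to first order in $\hbar$ is, by the second bullet in Subsection \ref{VVV} (the definition of the Poisson structure via $\{z,w\}=[\tilde z_{\hbar},\tilde w_{\hbar}]_{\WW}/\hbar \bmod \hbar$), exactly the Poisson bracket of $\ZZ$; and on the $\B$ side, the same first-order bracket is computed from the Lie$^*$ bracket on $\A^{ren,\tau}$, which by the defining property of $\A^{ren,\tau}$ in \cite{FG} Proposition 4.5 reproduces the Lie$^*$ algebroid structure on $\Omega^1(\ZZ)$ — i.e. again the Poisson bracket of $\ZZ$. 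Because by Theorem \ref{bbdd} a chiral $\R$-extension of $\OO$ is determined up to unique isomorphism by the corresponding object in $\mathcal{Q}^{ch}(\R)$, matching the brackets suffices. To nail down that the two first-order deformations agree on the nose (not just that they have isomorphic Poisson brackets), I would use the compatibility already recorded in Subsection \ref{embedding}: the embedding $l:\ZZ\hookrightarrow\B$ is the $0$-th filtered piece, and $l=l_0$ is $\Psi$ applied to the left embedding $l_{\hbar}:\LAH\to\DDH$; hence the $\hbar$-family $\mathcal{W}_{\hbar}=\Psi(\A_{\hbar})$ maps into $(\Psi\boxtimes\Psi)(\DDH)$ compatibly with filtrations, and reducing mod $\hbar^2$ identifies $\WW$ with the pullback of $(\B)_1$ along $d$.

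Having produced the isomorphism of first-order deformations, I would then transport it back through the equivalence of Theorem \ref{bbdd} to get an isomorphism $\Omega^c(\ZZ)\xrightarrow{\sim}(\B)_1$ of chiral $\ZZ$-extensions of $\Omega^1(\ZZ)$; composing with $(\B)_1\hookrightarrow\B$ gives $F$. Functoriality of the equivalence guarantees that $F$ is compatible with the chiral $\ZZ$-module structures and that it restricts to the structure map $\ZZ\to(\B)_1$ on the kernel, which by the last sentence of Subsection \ref{embedding} is precisely $l$; and the commuting triangle over $\Omega^1(\ZZ)$ holds because both $\Omega^c(\ZZ)/\ZZ$ and $(\B)_1/\ZZ$ are identified with $\Omega^1(\ZZ)$ as the cokernel, and $F$ is a morphism of the exact sequences. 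Finally, I would check the three bulleted conditions of Definition \ref{chiral envelope} for $F:\Omega^c(\ZZ)\to\B$ — that it is a Lie$^*$ morphism, restricts to a chiral algebra morphism on $\ZZ$, and is a morphism of $\ZZ$-modules — all of which are immediate from the fact that $F$ is a morphism of chiral $\ZZ$-extensions landing in the sub-$\ZZ$-module $(\B)_1$ on which the Lie$^*$ bracket of $\B$ restricts compatibly.

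\textbf{Main obstacle.} The crux is the on-the-nose identification of the two first-order deformations of $\ZZ$ — i.e. checking that the deformation read off from the renormalized algebroid $\A^{ren,\tau}$ inside $\DDC$ genuinely coincides with $\Psi$ applied to the Kac--Moody deformation $\A_{\hbar}$, rather than merely being abstractly isomorphic. This requires unwinding the definition of $\A^{ren,\tau}$ from \cite{FG} and tracking how the factor of $2$ (noted in the footnote in Subsection \ref{VVV}, where $\Omega^c(\R)$ quantizes $(\R,2\{\,,\,\})$) and the involution $\eta$ enter; the danger is a scalar or a sign discrepancy between the Poisson bracket coming from $\kappa_{crit}+\hbar\kappa_{kill}$ via the chiral bracket and the one coming from the Lie$^*$ algebroid structure on $\Omega^1(\ZZ)$. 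Everything else — exactness of $\Psi$, the PBW computation of $\mathrm{gr}_\cdot\B$, and the application of Theorems \ref{bbdd} and \ref{GR} — is formal once this matching is in place, which is presumably why the author defers the explicit construction of $F$ to Section \ref{mapF} after building $\Omega^c(\ZZ)$ by hand in Section \ref{conomega}.
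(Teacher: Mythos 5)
Your overall strategy is genuinely different from the paper's: you propose to treat both $\Omega^c(\ZZ)$ and $(\B)_1$ as objects of $\mathcal{P}^{ch}(\Omega^1(\ZZ))$ and invoke the equivalence of Theorem \ref{bbdd} to reduce everything to an isomorphism of the corresponding first-order deformations of $\ZZ$. That reduction is legitimate in outline, but it only repackages the theorem: under Theorem \ref{bbdd}, the assertion that the pullback of $(\B)_1$ along $d$ is isomorphic, as an object of $\mathcal{Q}^{ch}(\ZZ)$, to $\WW/\hbar^2\WW$ \emph{is} the content of Theorem \ref{main2}, and the mechanism you offer for it does not work as stated. The map $l_{\hbar}:\WW\rightarrow(\Psi\boxtimes\Psi)(\mathcal{D}_{\hbar})$ lands in the one-parameter \emph{family}, not in $\B$; at $\hbar=0$ it lands in $(\B)_0=\ZZ$, so "reducing mod $\hbar^2$" does not by itself produce, for $z_{\hbar}\in\ZZ^c$, an element of $(\B)_1$ lifting $dz$. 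To extract such an element one must form the divided difference
\[
f(z_{\hbar})=\frac{1}{2}\,\frac{l_{\hbar}(z_{\hbar})-r_{\hbar}(\eta(e(z_{\hbar})))}{\hbar}\pmod{\hbar},
\]
which is well defined only because $l_0=r_0\circ\eta$ on $\ZZ$ (so the numerator vanishes mod $\hbar$), and which has the right commutation properties only because the images of $l_{\hbar}$ and $r_{\hbar}$ centralize each other (\cite{FG} Lemma 5.2, used in the paper as Lemma \ref{zerobracket}). This formula, together with the verification that the induced map out of $\text{Ind}_{\ZZ}(\ZZ^c)$ kills the multiplicativity and Leibniz relations defining $\Omega^c(\ZZ)$, is the entire substance of the paper's proof; your proposal names this as "the main obstacle" but does not supply it, so as written there is a genuine gap exactly at the crux.

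Two secondary points. First, your claim that $(\B)_1$ is a chiral $\ZZ$-extension of $\Omega^1(\ZZ)$ (in particular that the $*$-operation attached to the chiral $\ZZ$-action satisfies the third bullet of Definition \ref{chiral algebroid}) is plausible but is itself something to verify, and the paper avoids having to do so by never treating $(\B)_1$ abstractly: it constructs $\Omega^c(\ZZ)$ explicitly as a quotient of the induced extension $\text{Ind}^{ch}_{\ZZ}(\ZZ^c)$ and maps it into $\B$ by hand. Second, "matching the brackets suffices" is false in $\mathcal{Q}^{ch}(\ZZ)$ --- two first-order deformations with the same Poisson bracket need not be isomorphic --- and you correctly retreat from this, but the replacement argument is the one that fails above. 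If you want to salvage your route, you should define $f$ by the displayed formula, prove it lands in $(\B)_1$ and lifts $d$, and only then invoke Theorem \ref{bbdd}; at that point you will have essentially reproduced Sections \ref{conomega} and \ref{mapF} of the paper.
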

\noindent We will now show how Theorem \ref{main} follows from Theorem \ref{main2}. The proof Theorem \ref{main2} will occupy the rest
of the article.
\begin{proof}[Proof of (Theorem \ref{main2} $\Rightarrow$ Theorem \ref{main}).]
 To prove Theorem \ref{main} we need to  show that the above $F$ induces an isomorphism $U(\Omega^c(\ZZ))\xrightarrow{\sim} \B$. 
This amounts to showing that the following diagram commutes for every $i$:
\begin{equation}\label{dia}
 \xymatrix{\text{gr}_{i+1} U(\Omega^c(\ZZ))\ar[rr]^{F}&&\text{gr}_{i+1} \B.\\
&\text{Sym}_{\ZZ}^{i+1} \Omega^1(\ZZ)\ar[ul]^{\simeq}\ar[ur]_{\simeq}&}
\end{equation}
But this follows from the fact that the above filtrations are generated by their first two terms. 
In fact, more generally, for any chiral envelope $U(\LI^c)$, we have
\[
\Delta_!(\text{gr}_{i+1} U(\LI^c)):=
\]

\[
=\bigslant{\text{Im}\left(\begin{array}{c}j_*j^*(U(\LI^c)_1\boxtimes U(\LI^c)_{i})\rightarrow\\ \Delta_!(U(\LI^c))\end{array}\right)}{\text{Im}\left(\begin{array}{c}j_*j^*(U(\LI^c)_1
\boxtimes U(\LI^c)_{i-1})\rightarrow \\ \Delta_!(U(\LI^c))\end{array}\right)}.
\]
It is not hard to see that the isomorphism $\text{Sym}^{i+1}_{\ZZ}\Omega^1(\ZZ)\xrightarrow{\sim}\text{gr}_{i+1} U(\Omega^c(\ZZ))$
(and similarly for $\B$) is the one induced by the map 
\[
 j_*j^*(\Omega^1(\ZZ)\boxtimes \text{Sym}^{i}_{\ZZ})\rightarrow \Delta_!(\text{gr}_{i+1} U(\Omega^c(\ZZ))),
\]
that in fact vanishes when restricted to $\Omega^1(\ZZ)\boxtimes \text{Sym}^{i}_{\ZZ}$, and factors through the action of $\ZZ$. 
Therefore the diagram (\ref{dia}) commutes by induction on $i$. 

\end{proof}

\section{construction of the chiral extension $\Omega^c(\ZZ)$}\label{conomega}
\subsection{}
As we saw in the previous section, the proof of Theorem \ref{main2} amounts to the construction 
of a particular map $F$ from $\Omega^c(\ZZ)$ to $\B$.
However the construction of such map
requires a more explicit description of $\Omega^c(\ZZ)$.\\
In fact recall that, for every commutative-Poisson chiral algebra $\R$ and quantization
$\{\R_h\}$ of the Poisson structure, there is canonically associated a chiral extension
\[
 0\rightarrow \R\rightarrow \OC\rightarrow \OO\rightarrow 0
\]
given by the equivalence of category stated in Theorem \ref{bbdd}. However the proof of this theorem doesn't provide a construction of it. This section will be devoted to the construction of the above extension.\\
\subsection{}
Starting from the Lie$^*$ algebra extension 
\[
 0\rightarrow \R\rightarrow \R^c\rightarrow \R\rightarrow 0,
\]
where $\RC:=\R_{\hbar}/\hbar^2\R_{\hbar} $ acts on $\R$ via the projection $\R^c\rightarrow \R$ and the Poisson bracket on $\R$, we will first construct a chiral extension (see Definition \ref{chiral algebroid}) $\INDR$ fitting into 
\[
 0\rightarrow \R\rightarrow \INDR \rightarrow \R\otimes \R\rightarrow 0,
\]
where $\R\otimes \R$ is viewed as a Lie$^*$ algebroid using the Poisson structure on $\R$.
The chiral extension $\OC$ will be then defined as a quotient $\INDR$.\\
More generally, in \ref{inizio}-\ref{fine} we will explain how to  construct a chiral extension $\INDLD$ fitting into
\begin{equation}\label{ext}
 0\rightarrow \R\rightarrow \INDLD\rightarrow \R\otimes L\rightarrow 0
\end{equation}
for every Lie$^*$ algebra $L$ acting on $\R$ by derivations and every extension
\[
 0\rightarrow \R\rightarrow L^c\rightarrow L\rightarrow 0.
\]
\noindent The case where $L=\R$ and $L^c=\R^c$, will be presented in \ref{particular} as a particular case of the above general construction.

\subsection{}\label{inizio} {\scshape Definition of $\INDLD$.}
Let $(\R,\mu)$ be a commutative chiral algebra and let $L$ be a Lie$^*$ algebra acting on $\R$ by derivations via the map
 $\tau$. The induced $\R$-module $\R\otimes L$ has a unique structure of Lie$^*$ $\R$-algebroid 
such that the morphism $1_{\R}\otimes id_{L}:L\rightarrow \R\otimes L$ is a morphism of Lie$^*$ algebras compatible with 
their actions on $\R$. Note that we have an obvious map $$i:L\rightarrow \R\otimes L.$$ The Lie$^*$ algebroid $\R\otimes L$ is called \emph{rigidified}. 
More generally we have the following definition.
\begin{defn}\label{liestar}
 A Lie$^*$ algebroid $\LI$ is called \emph{rigidified} if we are given a Lie$^*$ algebra $L$ acting on $\R$ via the map $\tau$, and an inclusion $i:L\rightarrow \LI$,
such that $\R\otimes L\overset{\sim}{\rightarrow}\LI.$
\end{defn}
\subsection{}
Let $\LI$ be a rigidified Lie$^*$ algebroid. 
Consider the map that sends a chiral extension
 of $\LI$ 
\[
0\rightarrow \R\rightarrow \LI^c\rightarrow \LI\rightarrow 0
\]
to the $\R$ extension of $L$ given by considering the pull-back of the map $i:L\rightarrow \LI$.\\
\noindent Denote by
$\mathcal{P}^{cl}(\LI)$ (resp. $\mathcal{P}^{ch}(\LI)$) the groupoid of classical (resp. chiral) extensions of $\LI$ (where by classical we mean extensions in the category of
Lie$^*$ algebroids), and by $\mathcal{P}(L,\tau)$
the Picard groupoid of $\R$-extensions of $L$.
\noindent Clearly the map mentioned above (that can be equally defined for classical extensions as well), defines two functors 
\[
 \mathcal{P}^{cl}(\LI)\rightarrow \mathcal{P}(L,\tau),\,\,\,\, \mathcal{P}^{ch}(\LI)\rightarrow \mathcal{P}(L,\tau).
\]
As it is explained in \cite{BD} 3.9.9. the following is true.
\begin{prop}\label{hh}
If $L$ is $\mathcal{O}_X$ flat, then these maps define an equivalence of groupoids 
\begin{equation}\label{inverse functor}
 \mathcal{P}^{cl}(\LI)\overset{\sim}{\rightarrow} \mathcal{P}(L,\tau),\,\,\,\, \mathcal{P}^{ch}(\LI)\overset{\sim}{\rightarrow} \mathcal{P}(L,\tau),
\end{equation}
\end{prop}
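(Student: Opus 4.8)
The plan is to establish the equivalence in Proposition \ref{hh} by constructing an explicit inverse functor. Since the statement appears in \cite{BD} 3.9.9, the task is really to sketch the argument rather than discover it. I would proceed in the classical and chiral cases in parallel, since the chiral case only adds the extra datum of the chiral $\R$-module structure on $\LI^c$ (and the compatibility of its $*$-part with $\tau$), which is uniquely pinned down once the underlying Lie$^*$ algebroid extension is given.

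\smallskip

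\noindent\textbf{Step 1: the inverse functor.} Given an $\R$-extension $0\to\R\to L^c\to L\to 0$ in $\mathcal{P}(L,\tau)$, I would define the induced extension of $\LI=\R\otimes L$ by the pushout-type construction
\[
\INDLL := \R\underset{L}{\otimes}^{\!?}L^c,
\]
more precisely: form $\R\otimes L^c$ (exterior tensor over $\mathcal{O}_X$, or rather the induced $\R$-module $\R\otimes L^c$ in the sense of $\overset{!}{\otimes}$), which surjects onto $\R\otimes L=\LI$ with kernel $\R\otimes\R$; then push out along the chiral multiplication $m:\R\otimes\R\to\R$ to obtain $\INDLL$ sitting in $0\to\R\to\INDLL\to\LI\to 0$. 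The Lie$^*$ bracket on $\INDLL$ is forced: on the image of $L^c$ it is the bracket of $L^c$, on $\R$ it is zero, and the cross-terms are determined by the $\R$-algebroid axioms (Leibniz rule for $\tau_{\LI}$ and the central $\R$-action). One checks this is well defined — the key point being that the Lie$^*$ bracket of $L^c$ together with the derivation action $\tau$ is exactly the data needed to make the Leibniz-type formula consistent after pushing out along $m$. In the chiral case one additionally equips $\INDLD$ with the chiral $\R$-module structure induced from the chiral multiplication of $\R$ on the $\R$-induced module $\R\otimes L^c$, and verifies the $*$-part compatibility $-i\circ\sigma\circ\tau_{\LI^c,\R}\circ\sigma$ of Definition \ref{chiral algebroid} — here one uses that $\tau$ on $L$ extends $\R$-linearly to $\tau_{\LI}$ on $\LI$.

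\smallskip

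\noindent\textbf{Step 2: the two functors are mutually inverse.} In one direction: starting from a (classical or chiral) extension $\LI^c$ of $\LI$, pulling back along $i:L\to\LI$ gives $L^c$, and then applying the induction of Step 1 recovers $\LI^c$ — because $\LI^c$ is generated as an $\R$-module (via its central/chiral $\R$-action) by the image of $L^c$, so the natural map $\INDLL\to\LI^c$ is surjective, and it is injective by comparing the associated graded / the short exact sequences (both have kernel $\R$ mapping isomorphically). Here the flatness hypothesis on $L$ over $\mathcal{O}_X$ enters: it guarantees that $\R\otimes L$ is flat and that the relevant sequences stay exact after the tensor/pushout operations, so no spurious kernel appears. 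In the other direction, pulling back the induced extension $\INDLL$ along $i$ gives back $L^c$ essentially by construction, since $i$ factors through the sub $L^c\subset\INDLL$ (the copy $1_\R\otimes L^c$ before pushout), and the pullback of the pushout extension along $i$ is the original. Naturality of these identifications in morphisms of extensions is routine.

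\smallskip

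\noindent The main obstacle I expect is Step 2's injectivity/surjectivity argument, i.e. showing the canonical comparison map $\INDLL\to\LI^c$ is an isomorphism: this is where one genuinely uses that $\LI$ is \emph{rigidified} (so that $\LI^c$ is controlled by its restriction to $L$ together with the $\R$-action) and where the $\mathcal{O}_X$-flatness of $L$ is needed to keep everything exact. Everything else — the construction of the bracket and chiral module structure on $\INDLL$, and the verification of the axioms of Definitions \ref{chiral algebroid} and \ref{liestar} — is a matter of unwinding definitions and checking Leibniz-type identities, with the Jacobi identity for $\INDLL$ following from that of $L^c$ together with the algebroid compatibilities. Since the full details are in \cite{BD} 3.9.9, I would present this as a sketch and refer there for the complete verification.
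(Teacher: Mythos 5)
Your classical half is exactly the paper's construction (\S\ref{classic}): push out $0\to\R\otimes\R\to\R\otimes L^c\to\R\otimes L\to 0$ along $m:\R\otimes\R\to\R$ to get $\INDC$, and the flatness of $L$ is used just as you say. The gap is in the chiral half. You assert that the chiral case ``only adds the extra datum of the chiral $\R$-module structure, which is uniquely pinned down once the underlying Lie$^*$ algebroid extension is given,'' and that this structure can be ``induced from the chiral multiplication of $\R$ on the $\R$-induced module $\R\otimes L^c$.'' Neither claim holds as stated: the centrally induced module $\R\otimes L^c$ carries only a \emph{central} $\R$-action, and a classical extension is never a chiral one when $\tau\neq 0$, because the third axiom of Definition \ref{chiral algebroid} forces the $*$-part of the chiral action to equal $-i\circ\sigma\circ\tau_{\LI^c,\R}\circ\sigma$, which is nonzero. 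So a chiral extension is not a classical extension plus canonical decorations, and your pushout does not acquire the required chiral action for free.

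The paper (following \cite{BD} 3.9.8--3.9.9, recalled in \S\ref{classic2} and \S\ref{pppp}) handles this with two ideas you are missing. First, $\mathcal{P}^{ch}(\LI)$ is a \emph{torsor} over the Picard groupoid $\mathcal{P}^{cl}(\LI)$ under Baer sum: the Baer difference of two chiral extensions is classical precisely because their $*$-parts cancel. Second, the torsor is nonempty because of the distinguished chiral extension $\INDL$, which is built not from $\R\otimes L$ but by pushing out $j_*j^*(\R\boxtimes L)$ along the action map $\R\boxtimes L\to\Delta_!(\R)$, so that $\Delta_!(\INDL)\simeq\Delta_!(\R)\oplus j_*j^*(\R\boxtimes L)/\R\boxtimes L$ --- a genuinely chiral induction. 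The chiral inverse functor is then $\INDLD:=\INDC\underset{Baer}{+}\INDL$ (equivalently, the direct construction of \S\ref{fine} via $j_*j^*(\R\boxtimes L^c)$ and the relation $\overline{k}-m$). Your Step 2 (pullback along $i:L\to\LI$ recovers $L^c$, generation plus flatness gives the isomorphism) is fine for the classical statement, but without the torsor argument and the base point $\INDL$ your sketch does not establish the chiral equivalence.
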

\vspace{0.2cm}

  Given a Lie$^*$ algebra extension $0\rightarrow \R\rightarrow L^c\rightarrow L\rightarrow 0,$
define $\INDC$ (resp. $\INDLD$) to be the classical (resp. chiral) extension corresponding to the above  sequence under the equivalences stated in the above proposition.\\

\vspace{0.1cm}
\subsection{}In subsections \ref{classic}-\ref{classic2} we will briefly recall the construction of the inverse functors to (\ref{inverse functor}) in the classical and chiral setting respectively (as presented in \cite{BD}). However in \ref{fine} we will give a different construction of the inverse functor in the chiral setting, i.e. a different construction of the chiral extension $\INDLD$ associated to any $\R$-extension of $L$. The latter construction will be used to define the chiral extension $\Omega^c(\R)$. 

\subsection{}\label{classic}
For the "classical" map $ \mathcal{P}^{cl}(\LI)\rightarrow \mathcal{P}(L,\tau)$, to an extension 
\begin{equation}\label{BO}
0\rightarrow \R\rightarrow L^c\rightarrow L\rightarrow 0,
\end{equation}
the inverse functor associates the classical extension $\INDC$ of the Lie$^*$ algebroid $\R\otimes L=\LI$ given by the push-out of the extension
\[
0\rightarrow \R\otimes \R\rightarrow \R\otimes L^c\rightarrow \R\otimes L\rightarrow 0
\]
via the map $m:\R\otimes \R\rightarrow \R$.\\

\vspace{0.2cm}
\subsection{}\label{classic2}
The construction of the inverse functor in the "chiral" setting given in \cite{BD}  (i.e. the construction of $\INDLD$), uses the following two facts:
\begin{itemize}\itemsep -2pt
\item  $\mathcal{P}^{ch}(\LI)$ has a structure 
of  $\mathcal{P}^{cl}(\LI)$-torsor under Baer sum. \\
\item $\mathcal{P}^{ch}(\LI)$ is non empty.
\end{itemize}
The first fact follows from condition $3)$ in the definition of chiral $\R$-extension, which guarantees that the Baer difference 
of two chiral extensions is a classical one. In other words the action of $\R$ on the sum of two chiral extensions is automatically central.\\

\vspace{0.1cm}
\noindent The non emptiness of $\mathcal{P}^{ch}(\LI)$ follows from the existence of a distinguished 
chiral $\R$-extension $\INDL$ attached to every Lie$^*$ algebra $L$ acting on $\R$. Such object is defined by the following:
\begin{defprop}\label{defp}
 Suppose that we are given a Lie$^*$ algebra $L$ acting by derivations on $\R$ via the map $\tau$, and let $\LI$ be a rigidified 
Lie$^*$ algebroid (see Definition \ref{liestar}), so we have a morphism of Lie$^*$ algebras $i:L\rightarrow \LI$ such that $\R\otimes L\xrightarrow{\sim} \LI$.
Then there exist a chiral extension $\INDL$ equipped with a lifting $\overline{i}:L\rightarrow \INDL$ such that $\overline{i}$ is a morphism of Lie$^*$ algebras
and the adjoint action of $L$ on $\R$ via $\overline{i}$ equals $\tau$. The pair $(\INDL,\overline{i})$ is unique.
\end{defprop}

The proof of this proposition can be found in \cite{BD} 3.9.8. However in \ref{pppp} we will recall the construction of $\INDL$ and of the map 
$\overline{i}:L\rightarrow \INDL$. \\

To finish the construction of $\INDLD$ (or in other words, the construction of the inverse to the functor $\mathcal{P}^{ch}(\R)\rightarrow \mathcal{P}(L,\tau)$), we use the classical extension $\INDC$ given in \ref{classic} together with the $\mathcal{P}^{cl}(\LI)$-action on $\mathcal{P}^{ch}(\LI)$. To the extension $0\rightarrow \R\rightarrow L^c\rightarrow L\rightarrow 0$ 
we associate the chiral $\R$-extension $$\INDLD:=\INDC\underset{Baer}{+}\INDL$$ 
\noindent of $\R\otimes L$ by $\R$, where $\INDL$ is the distinguished classical extension defined in \ref{defp}. Note that, after pulling back the extension 
\[
0\rightarrow \R\rightarrow \INDLD\rightarrow \LI\simeq \R\otimes L\rightarrow 0
\] via the map $L\rightarrow \LI\simeq \R\otimes L$, we obtain the Baer sum of the trivial extension (corresponding to $\INDL$) with $L^c$, i.e. we recover the initial Lie$^*$ extension $0\rightarrow \R\rightarrow L^c\rightarrow L\rightarrow 0$ as we should.\\

\subsection{}\label{pppp}
In this subsection we want to recall the construction and the main properties of the distinguished chiral extension $\INDL$ given by Definition-Proposition \ref{defp}.\\

\vspace{0.1cm}
Given a Lie$^*$ algebra $L$ acting on $\R$ by derivations, we can consider the action map $\R\boxtimes L\rightarrow \Delta_{!}(\R)$  and consider the following push out:
\[
\xymatrix{
0\ar[r]&\R\boxtimes L\ar[r]\ar[d]&j_*j^*(\R\boxtimes L)\ar[r]\ar[d]&\Delta_!(\R\otimes L)\ar[r]\ar[d]&0\\
0\ar[r]& \Delta_!(\R)\ar[r]&\Delta_!(\R)\oplus j_*j^*(\R\boxtimes L)/\R\boxtimes L\ar[r]& \Delta_!(\R\otimes L)\ar[r]&0.
} 
\]
The term in the middle is a $\DX$-module supported on the diagonal, hence by Kashiwara's Theorem (see \cite{K} Theorem 4.30) it corresponds to a $\DX$ module on $X$.
This $\DX$-module has a structure of chiral extension and will be our desired $\INDL$ (i.e. we have $\Delta_!(\INDL)\simeq \Delta_!(\R)\oplus j_*j^*(\R\boxtimes L)/\R\boxtimes L$).
\begin{remark}
By construction we have inclusions $\R\rightarrow \INDL$ and a lifting  $\overline{i}:L\rightarrow \INDL$ of $i:L\rightarrow \R\otimes L$.
 In fact we can consider the 
following diagram 
\[
  \xymatrix{0\ar[r]&\OX\boxtimes L\ar[r]\ar[d]& j_*j^*(\OX\boxtimes L)\ar[r]\ar[d]&\Delta_!(\OX\overset{!}{\otimes} L)\simeq \Delta_!(L)\ar[ddl]_{\Delta_!(\overline{i})}\ar[r]\ar[d]^{\Delta_!(i)=\Delta_!(unit\otimes id)}&0\\
0\ar[r]&\R\boxtimes L\ar[r]\ar[d]& j_*j^*(\R\boxtimes L)\ar[r]|\hole\ar[d]^{\pi}&\Delta_!(\R\overset{!}{\otimes}L)\ar[r]\ar[d]&0\\
0\ar[r]&\Delta_!(\R)\ar[r]&\bigslant{\Delta_{!}(\R)\oplus j_*j^*(\R\boxtimes L)}{\R\boxtimes L}\ar[r]&\Delta_!(\R\overset{!}{\otimes} L)\ar[r]& 0.
}
\]
By looking at the composition of the two vertical arrows in the middle, it is not hard to see that this composition factors
through $\Delta_!(L)$. In fact the most left vertical arrow from $\OX\boxtimes L$ to $\Delta_!(\R)$ is zero.
We define $\overline{i}$ to be the map corresponding (under the Kashiwara's equivalence) to $\Delta_!(\overline{i})$.\\

\vspace{0.3 cm}
\noindent As it is shown in \cite{BD} 3.3.6. the inclusions $\R\rightarrow \INDL$, $\overline{i}:L\rightarrow \INDL$ and the chiral operation 
$j_*j^*(\R\boxtimes L)\rightarrow \Delta_!(\INDL)$, 
uniquely determine a chiral action of $\R$ on $\INDL$ and a Lie$^*$ bracket on it. In other words they give $\INDL$ a structure of chiral $\R$-extension.

\end{remark}
\noindent Note that this chiral $\R$-extension corresponds, under the equivalence given by Theorem \ref{hh} (i.e. after we pull-back the extension via the map $\psi:L\rightarrow \R\otimes \LI$),
 to the trivial extension of $L$ by $\R$ in $\mathcal{P}(L,\tau)$.
 \subsection{}\label{summary}
To summarize we have seen that:
\begin{itemize}
\item If a Lie$^*$ algebra $L$ acts on $\R$ we can construct the distinguished chiral extension $\INDL$ of $\LI$ with a lifting $\overline{i}:L\rightarrow \INDL$ of the canonical map $i:L\rightarrow \LI$.\\
\item From an extension $0\rightarrow \R\rightarrow L^c\rightarrow L\rightarrow 0$ we can construct a chiral extension $\INDLD$ with a map $L^c\rightarrow \INDLD$ given by the pull-back of $L\rightarrow \LI$.
\end{itemize}

\begin{remark}\label{rem2}
Clearly, if we have the extension $0\rightarrow \R\rightarrow L^c\rightarrow L\rightarrow 0$, we can also consider $L^c$ as a Lie$^*$ algebra acting on $\R$ 
via the projection $L^c\rightarrow \R$. In other words we forget about the extension and we only remember the Lie$^*$ algebra $L^c$. From point one of the above summary we can construct the distinguished chiral extension $\text{Ind}_{\R}(L^c)$ corresponding to this $L^c$ action on $\R$, together with a map
$\overline{i}:L^c\rightarrow \text{Ind}_{\R}(L^c)$.
\end{remark}

\subsection{}\label{fine}{ \scshape Different construction of $\INDLD$.}\label{bbb}
We will now explain a different construction of the chiral extension
 $$0\rightarrow \R\rightarrow \INDLD\rightarrow \LI\simeq\R\otimes L\rightarrow 0$$ that will be used later to construct $\OC$.\\
 
As it is explained in the Remark \ref{rem2}, given an $\R-$extension
\[
 0\rightarrow \R\xrightarrow{k}L^c\rightarrow L\rightarrow 0,
\]
we can consider the action of $L^c$ on $\R$ given by the projection $L^c\rightarrow L$ and construct
the distinguished  chiral extension $\INDLL$. This is a chiral $\R$-extension fitting into
\[
0\rightarrow \Delta_!(\R)\rightarrow \Delta_{!}(\INDLL)\rightarrow \Delta_!(\R\otimes L^c)\rightarrow 0,
\]
where
$\Delta_{!}(\INDLL)\simeq\bigslant{\Delta_!(\R)\oplus j_*j^*(\R\boxtimes L^c)}{\R\boxtimes L^c} .$ Since we ultimately want an extension of $\R$ by $\R\otimes L$, we have to quotient the above sequence by some additional relations. We will in fact
 obtain $\INDLD$ by 
taking the quotient of $\INDLL$ by the image of
 the difference of two maps from $\R\otimes \R\rightarrow \INDLL$. \\

\vspace{0.3 cm}
\noindent The above maps are given (under the Kashiwars's equivalence) by the following 
two maps from $\Delta_!(\R\otimes \R)$ to $\Delta_!(\INDLL)$.\\
1) The first map is given by the composition
\[
 \Delta_!(\R\otimes \R)\xrightarrow{m}\Delta_!(\R)\longhookrightarrow \Delta_{!}(\INDLL).
\]
\noindent 2) For the second map, consider the following commutative diagram:
\[
 \xymatrix{0\ar[r]&\R\boxtimes \R\ar[r]\ar[d]^{id\boxtimes k}& j_*j^*(\R\boxtimes \R)\ar[r]\ar[d]^{k\boxtimes id}&\Delta_!(\R\otimes \R)\ar[r]\ar[d]^{id\otimes k}&0\\
0\ar[r]&\R\boxtimes L^c\ar[r]\ar[d]& j_*j^*(\R\boxtimes L^c)\ar[r]\ar[d]^{\pi}&\Delta_!(\R\otimes L^c)\ar[r]\ar[d]&0\\
0\ar[r]&\Delta_!(\R)\ar[r]&\underset{\overset{\simeq}{\Delta_!(\INDLL)}}{\Delta_{!}(\R)\oplus j_*j^*(\R\boxtimes L^c)/\R\boxtimes L^c}\ar[r]&\Delta_!(\R\otimes L^c)\ar[r]& 0.
}
\]
We claim that the composition of the two vertical arrows in the middle (i.e. $\pi\circ (k\boxtimes id)$) factors through 
$\Delta_!(\R\otimes \R)$. In fact since the action of $L^c$ on $\R$ is given by the projection $L^c\rightarrow \R$,
the copy of $\R$ inside $L^c$ via k acts by zero. Hence the composition of the left most vertical arrows is zero, which shows that 
there is a well defined map $$\overline{k}:\Delta_!(\R\otimes \R)\rightarrow \Delta_!(\INDLL).$$
\noindent The quotient of $\INDLL$ by the image of the difference of the above maps is exactly $\INDLD$.

\begin{remark}
\noindent Note that the inclusion  $L^c\rightarrow \INDLD$ mentioned in the summary \ref{summary} corresponds to the composition
\begin{equation}\label{bbbb}
 \Delta_!(L^c)\xrightarrow{\Delta_!(\overline{i})}\Delta_!({\INDLL})\twoheadrightarrow \Delta_{!}(\INDLD).
\end{equation}
\end{remark}

\vspace{.2cm}
\subsection{}\label{particular}{\scshape A special case: deformations of $\R$.}
\noindent Now let $(\R,\,m:\R\otimes \R\rightarrow \R)$ be a commutative chiral algebra given as $\R:=\R_{\hbar}/\hbar\R_{\hbar}$,
where $\{\R_{\hbar}\}$ is a family of chiral algebras. Denote by $\{\,,\,\}$ the Poisson bracket on $\R$ defined as
\[
\{z,w\}=\frac{1}{\hbar}[z_{\hbar},w_{\hbar}]_{\hbar}\pmod \hbar, 
\]
where $z_{\hbar}=z\pmod \hbar$, $w_{\hbar}=w\pmod \hbar$ and $[\,,\,]_{\hbar}$ denotes the Lie$^*$ bracket on $\R_{\hbar}$ induced by the chiral product $\mu_{\hbar}$ restricted to $\R_{\hbar}\boxtimes \R_{\hbar}$.

\noindent Consider the quotient $\RC=\R_{\hbar}/\hbar^2\R_{\hbar}$. This is a Lie$^*$ algebra with bracket $[\,,\,]_c$ defined by
\[
[\overline{z_{\hbar}},\overline{w_{\hbar}}]_c=\overline{\frac{1}{\hbar}[z_{\hbar},w_{\hbar}]_h}.
\]
Consider the short exact sequence 
\begin{equation}\label{SS}
0\rightarrow \R\xrightarrow{\cdot \hbar}\RC\rightarrow \R\rightarrow 0, 
\end{equation}
\noindent and let us regard $\RC$ as a Lie$^*$ algebra acting on $\R$ via the projection $\RC\rightarrow \R$ followed by
 the Poisson bracket multiplied by\footnote{This correction is due to the fact that, as 
we saw in \ref{VVV}, the equivalence stated in 
Theorem \ref{bbdd} gives a quantization of $1/2\{\,,\,\}$.} $1/2$.
This sequence is an $\R$-extension of $\R$ in the sense we introduced in Definition \ref{lieextension}, therefore,
from what we have seen in \ref{bbb}, we can construct a chiral $\R$-extension of $\R\otimes \R$ by $\R$ (here $L=\R$ and $L^c=\RC$)
\begin{equation}\label{BB}
 0\rightarrow \R\rightarrow \INDR \rightarrow \R\otimes \R\rightarrow 0.
\end{equation}
Below we will use the above chiral extension to define the chiral algebroid $\Omega^c(\R).$
\subsection{}\label{construction}{\scshape The construction of $\OC$.}
We can now proceed to the construction of $\OC$. Recall that, because 
of the Poisson bracket on $\R$, the sheaf $\OO$ acquires a structure of a Lie$^*$ algebroid.
 In fact the action of $\R$ on $\R$ given by the Poisson bracket yields a Lie$^*$ $\R$-algebroid structure on $\R\otimes \R$. One checks 
easily that the kernel of the projection $\R\otimes \R\rightarrow \OO$, $z\otimes w\mapsto zdw$, is an ideal in $\R\otimes \R$, 
hence $\OO$ inherits a Lie$^*$ algebroid structure.\\

Recall that we denoted by $\mathcal{Q}^{ch}(\R)$ the groupoid of $\mathbbm{C}[\hbar]/\hbar^2$-deformations of our chiral-Poisson algebra $\R$, and that we want to understand how to construct the inverse to 
the functor
\[
 \mathcal{P}^{ch}(\OO)\rightarrow \mathcal{Q}^{ch}(\R),
\]
that assigns to a chiral extension $0\rightarrow \R\rightarrow \OC\rightarrow \OO\rightarrow 0$, its 
pull-back via the differential $d:\R\rightarrow \OO$. 

\subsection{}\label{rrr}
The inverse functor will be constructed as follows:  for any object in $\mathcal{Q}^{ch}(\R)$, i.e. to for any extension $0\rightarrow \R\xrightarrow{\cdot \hbar}\RC\rightarrow \R\rightarrow 0,$
we will consider the chiral extension
\[
  0\rightarrow \R\rightarrow \INDR \rightarrow \R\otimes \R\rightarrow 0
\]
described in the previous subsection.  We will quotient $\INDR$ by some additional relations
in order to impose the Leibniz rule on $\R\otimes \R$. These relations will be given, under Kashiwara's equivalence,
 as the image of 
a map from $\Delta_!(\RC\otimes \RC)$ to $\Delta_!(\INDR)$. More precisely, we will construct a map
from $j_*j^*(\RC\boxtimes \RC)$ to $\Delta_{!}(\IND)$ such that the composition with 
the projection $\Delta_{!}(\IND)\rightarrow \Delta_!(\INDR)$ vanishes when restricted to $\RC\boxtimes \RC$.
Hence it will induce a map $\Delta_!(\RC\otimes \RC)\rightarrow \Delta_!(\INDR).$ 
Form the sequence (\ref{BB}) we will  therefore obtain a chiral  $\R$-extension $\widetilde{\OC}$ of the Lie$^*$ algebroid $\OO$
\[
 0\rightarrow \R'\rightarrow \widetilde{\OC}\rightarrow \OO\rightarrow 0.
\]
We will then check that $\R'$, which a priori is a quotient of $\R$, is in fact $\R$ itself, and that the pull-back via the differential $d:\R\rightarrow \OO$ is the original sequence (\ref{SS}),
 with induced Poisson bracket given by $\{,\}$. This will imply that $\widetilde{\OC}$ is in fact the chiral extension $\OC$ given by Theorem \ref{bbdd}.\\ 

\subsection{}
The map from $j_*j^*(\RC\boxtimes \RC)$ to $\Delta_{!}(\IND)$ is defined as the sum of the following three maps:
\begin{enumerate}
\item The first map $\alpha_1$ is given by the composition $$j_*j^*(\RC\boxtimes \RC)\rightarrow j_*j^*(\R\boxtimes \RC)\rightarrow \Delta_!(\IND),$$ where the first map comes from the projection $\RC\rightarrow \R$.\\
\item The second map $\alpha_2$ is obtained from the first one by interchanging the roles of the factors in $j_*j^*(\RC\boxtimes \RC)$.\\
\item For the third map $\alpha_3$, note that the chiral bracket $\mu_{\hbar}$ on $\R_h$ gives rise to a map
\[
\cdot\mu_c:j_*j^*(\RC\boxtimes \RC)\rightarrow \Delta_{!}(\RC)
\]
and we compose it with the canonical map $\Delta_!(\RC)\rightarrow \Delta_!(\IND)$. 
\end{enumerate}
Now consider the linear combination $\alpha_1-\alpha_2-\alpha_3$ as a map from $j_*j^*(\RC\boxtimes \RC)$ to $\Delta_!(\IND)$. 
If we compose this map with the inclusion $\RC\boxtimes \RC\rightarrow j_*j^*(\RC\boxtimes \RC)$ and the projection onto $\INDR$, it is easy to see that the map vanishes. More precisely we have the following:
\begin{lem}
The composition
\[
\RC\boxtimes \RC\hookrightarrow j_*j^*(\RC\boxtimes \RC)\xrightarrow{\alpha_1-\alpha_2-\alpha_3}\Delta_!(\IND)\twoheadrightarrow\Delta_!(\INDR)
\]
vanishes. Thus it defines a map $Leib:\Delta_!(\RC\otimes \RC)\rightarrow \Delta_!(\INDR).$
\end{lem}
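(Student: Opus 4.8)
The plan is to prove that the three chiral maps $\alpha_1,\alpha_2,\alpha_3$, when combined as $\alpha_1-\alpha_2-\alpha_3$ and then composed with the projection $\Delta_!(\IND)\twoheadrightarrow\Delta_!(\INDR)$, kill the submodule $\RC\boxtimes\RC\hookrightarrow j_*j^*(\RC\boxtimes\RC)$; by the fundamental exact sequence $0\to\RC\boxtimes\RC\to j_*j^*(\RC\boxtimes\RC)\to\Delta_!(\RC\otimes\RC)\to 0$ this is exactly what is needed to factor the map through $\Delta_!(\RC\otimes\RC)$, producing $Leib$. First I would recall that the projection $\INDLL\twoheadrightarrow\INDR$ is, by construction in \ref{bbb}, the quotient by the image of $\overline{k}-(m\circ\text{can})$ coming from $\Delta_!(\R\otimes\R)$; here $L=\R$, $L^c=\RC$, and $k$ is the map $\cdot\hbar:\R\to\RC$ in (\ref{SS}). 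So the content of the lemma is a statement \emph{purely about the $*$-operations} (restrictions to the external product), which is where all three of $\alpha_1,\alpha_2,\alpha_3$ are being evaluated.

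The key computation is to identify the $*$-operation underlying each $\alpha_i$ when restricted to $\RC\boxtimes\RC$. For $\alpha_1$: the composition $\RC\boxtimes\RC\to\R\boxtimes\RC\to\Delta_!(\IND)$ is, by the construction of $\INDLL$ recalled in \ref{pppp} (the lower push-out row), precisely the chiral action map $\R\boxtimes\RC\to\Delta_!(\R)\hookrightarrow\Delta_!(\IND)$, which on the $*$-level is $(-1)$ times the transposed action $\sigma\circ\tau_{\RC,\R}\circ\sigma$ — i.e. it records $\RC$ acting on $\R$ by the Poisson bracket (times $1/2$), with the first variable in $\R$. Symmetrically $\alpha_2$ gives the same action with the variables swapped. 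For $\alpha_3$: the $*$-operation of $\mu_c$ on $\RC\boxtimes\RC$ is the Lie$^*$ bracket $[\,,\,]_c$ on $\RC$, which modulo $\hbar$ (i.e. after landing in $\INDR$ and using that the $\R$ sitting inside is $\hbar\RC$) is again governed by the Poisson bracket on $\R$. Thus after projecting to $\Delta_!(\INDR)$ all three maps are expressed through the single Poisson bracket $\{\,,\,\}$ on $\R$, and the antisymmetry of the Lie$^*$ bracket together with the Leibniz-type relation already imposed in passing from $\INDLL$ to $\INDR$ (the relation $\overline{k}=m$, which says precisely that $z\cdot\hbar w$ and $\hbar(zw)$ are identified, i.e. the copy of $\R\otimes\R$ is collapsed via $m$) makes $\alpha_1-\alpha_2-\alpha_3$ vanish on the external product. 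Concretely, on $\RC\boxtimes\RC$ one gets $\{z,-\}$-type terms from $\alpha_1$, $\alpha_2$ and the cross term, minus the bracket term from $\alpha_3$, and these cancel identically once one writes $[\overline{z_\hbar},\overline{w_\hbar}]_c$ out and uses the definition of the $\RC$-action on $\R$.

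The main obstacle, and the step I would spend the most care on, is bookkeeping the signs and the factor of $1/2$ consistently: the $*$-operation of a chiral $\R$-action carries a sign $-i\circ\sigma\circ\tau\circ\sigma$ by the third axiom in Definition \ref{chiral algebroid}, the antisymmetry of $\mu$ contributes another sign when swapping factors for $\alpha_2$, and the $\RC$-action on $\R$ was defined as the Poisson bracket multiplied by $1/2$. One must check these combine so that the coefficient of the residual Poisson term in $\alpha_1-\alpha_2-\alpha_3|_{\RC\boxtimes\RC}$ is exactly zero rather than, say, a nonzero multiple of $\{\,,\,\}$. A clean way to organize this is to work on the fiber at a point $x$, where $\Delta_!$ becomes the delta-module and all maps become honest maps of topological associative/Lie algebras, reducing the verification to the classical identity that in $\RC=\R_\hbar/\hbar^2$ one has $\tfrac1\hbar[z_\hbar,w_\hbar]\equiv\{z,w\}+\hbar(\cdots)$ and the Leibniz relation $z\cdot(\hbar w)-\hbar\cdot(zw)\in\hbar^2$; then lift the resulting vanishing back to the $\mathcal D_X$-module level using the exactness of $\Delta_!$ and Kashiwara's equivalence. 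Once vanishing on $\RC\boxtimes\RC$ is established, the induced map $Leib:\Delta_!(\RC\otimes\RC)\to\Delta_!(\INDR)$ is immediate from the snake/five-lemma applied to the defining exact sequence, completing the proof.
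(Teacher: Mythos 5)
Your proposal follows essentially the same route as the paper's proof: you identify the restrictions of $\alpha_1$, $\alpha_2$, $\alpha_3$ to $\RC\boxtimes\RC$ as $\tfrac12\{\,,\,\}$, $-\tfrac12\{\,,\,\}$ (via antisymmetry of the Poisson bracket) and $\{\,,\,\}$ respectively, the last using the relation identifying $\overline{k}$ with $m$ in $\INDR$, and conclude that the combination cancels. This is exactly the paper's argument, so the proposal is correct and not materially different.
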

\begin{proof}
Since the action of $\RC$ on $\R$ is given by the projection $\RC\rightarrow \R$ and the Poisson bracket on $\R$ multiplied by $1/2$, and because of the 
relation $\sigma\circ\{\,,\,\}\circ \sigma=-\{\,,\,\}$, the maps $\alpha_1$ and $\alpha_2$ factor as
\[
 \xymatrix{\RC\boxtimes \RC\ar[r]\ar[dr]_{\alpha_1=\frac{1}{2}\{\,,\,\}=-\alpha_2}&\Delta_!(\IND)\ar[r]&\Delta_!(\INDR)\\
&\Delta_!(\R)\ar[ur]&}.
\]
Note that the above wouldn't have been true if we hadn't used the relation in $\IND$ as well. Moreover the third map, when composed with the projection to $\Delta_!(\INDR)$ is exactly 
\[
\RC\boxtimes \RC\rightarrow \R\boxtimes \R\xrightarrow{\{\,,\,\}} \Delta_!(\R)\rightarrow \Delta_!(\INDR),
\]
hence the combination $\alpha_1-\alpha_2-\alpha_3$ is indeed zero.
From the above we therefore get a map $\Delta_!(\RC\otimes \RC)\rightarrow \Delta_!(\INDR)$.\\
\end{proof}

We define $\widetilde{\OC}$ to be the quotient of $\INDR$ by 
the image of the corresponding map from $\RC\otimes \RC$ to $\INDR$ under the Kashiwara's equivalence.\\

\begin{remark}\label{rr}
 Note that the map $\RC\otimes \RC\rightarrow \INDR$ indeed factors through $\RC\otimes \RC\twoheadrightarrow \R\otimes \R$. To show this it is enough to show that 
the map $j_*j^*(\RC\boxtimes \RC)\rightarrow \Delta_!(\INDR)$ factors through $j_*j^*(\RC\boxtimes \RC)\rightarrow j_*j^*(\R\boxtimes \R)$. If so, then the diagram below would  imply that the composition 
$\R\boxtimes \R\rightarrow \Delta_!(\INDR)$ is zero, and we are done:
\[
 \xymatrix{0\ar[r]&\RC\boxtimes \RC\ar[r]\ar[d]&j_*j^*(\RC\boxtimes \RC)\ar[r]\ar[d]&\Delta_!(\RC\otimes \RC)\ar[d]\ar[r]&0\\
0\ar[r]&\R\boxtimes \R\ar[r]&j_*j^*(\R\boxtimes \R)\ar[r]\ar[d]&\Delta_!(\R\otimes \R)\ar[r]&0\\
&&\Delta_!(\INDR)&&}.
\]
To show that the map factors as 
\[
 \xymatrix{j_*j^*(\RC\boxtimes \RC)\ar[r]\ar[d]&\Delta_!(\INDR)\\
j_*j^*(\R\boxtimes \R)\ar[ur]&}
\]
we need to show that the composition of the map $\alpha_1-\alpha_2-\alpha_3$ with the two embeddings 
$j_*j^*(\RC\boxtimes \R)\longhookrightarrow j_*j^*(\RC\boxtimes \RC)$ and  
$j_*j^*(\R\boxtimes \RC)\longhookrightarrow j_*j^*(\RC\boxtimes \RC)$ is zero.
We'll do only one of them (the second one can be done similarly). For the first embedding the map $\alpha_2$ is zero 
(since we are projecting the second $\RC$ onto $\R$) whereas the first map (because of the relations in $\INDR$) is equal to minus the composition
\[
 j_*j^*(\RC\boxtimes \R)\rightarrow j_*j^*(\R\boxtimes \R)\xrightarrow{\mu}\Delta_!(\R)\rightarrow \Delta_!(\INDR)
\]
which is exactly the third map when restricted to $j_*j^*(\RC\boxtimes \R)$.
\end{remark}

\subsection{} \label{cacca}Recall that we defined $\widetilde{\OC}$ to be the quotient 
of $\INDR$ by the image of the map $Leib$ obtained using the combination $\alpha_1$-$\alpha_2$-$\alpha_3$. 
By construction we have a short exact sequence 
\begin{equation}\label{omega}
0\rightarrow \R'\rightarrow \widetilde{\OC}\rightarrow \OO\rightarrow 0,
\end{equation}
where $\R'$ is a certain quotient of $\R$. In the rest of this section we will show that the above extension is in fact isomorphic to the extension of $\Omega^1(\R)$ given in Theorem \ref{bbdd}.  This is equivalent to the following:
\begin{prop}
Consider the extension of $\Omega^1(\R)$ given by (\ref{omega}). Then we have 
\begin{enumerate}
 \item $\R'=\R$.\\
\item The pull-back of (\ref{omega}) via the differential $d:\R\rightarrow \OO$ is the original sequence (\ref{SS}).
\end{enumerate}
\end{prop}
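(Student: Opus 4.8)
The plan is to prove the two assertions in the order stated, building on the explicit description of $\widetilde{\OC}$ as the quotient of $\INDR$ by the image of $Leib$. For part (1), I would first observe that $\R'$ is by definition the preimage in $\widetilde{\OC}$ of the kernel of $\R\otimes \R\twoheadrightarrow \OO$, i.e. the image of the composite $\ker(\R\otimes\R\to\OO)\to \R\otimes\R\hookrightarrow \INDR/\mathrm{Im}(Leib)$. But from the sequence (\ref{BB}), $\R'$ is visibly a quotient of the copy of $\R$ sitting inside $\INDR$, so the task is to show that the image of $Leib$ meets this copy of $\R$ only in $0$; equivalently, the composite $\Delta_!(\RC\otimes\RC)\xrightarrow{Leib}\Delta_!(\INDR)\twoheadrightarrow\Delta_!(\R\otimes\R)$ has the property that its kernel, when pulled into $\INDR$, lands in the $\R$-part in a way that is already in the image of the $\alpha_3$-type relation. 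Concretely, I would compute the composite of $\alpha_1-\alpha_2-\alpha_3$ with the projection $\Delta_!(\IND)\to\Delta_!(\R)$ (the first summand in $\Delta_!(\INDR)\simeq\Delta_!(\R)\oplus\dots$, or rather with the projection onto the $\R$-part of $\INDR$) and check it is surjective onto the relevant subsheaf, forcing $\R'=\R$; here the factorization diagrams already established in the Lemma and in Remark \ref{rr} do most of the bookkeeping.

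For part (2), I would pull back the sequence (\ref{omega}) along $d:\R\to\OO$, $z\mapsto dz$. Since $d$ factors as $\R\xrightarrow{z\mapsto 1\otimes z}\R\otimes\R\twoheadrightarrow\OO$ (up to sign/normalization), the pullback of $\widetilde{\OC}$ along $d$ is computed by pulling back $\INDR$ along $1\otimes\mathrm{id}:\R\to\R\otimes\R$ and then imposing the $Leib$-relations restricted to that locus. By the remark at the end of \ref{classic2} (the analogous statement for $\INDLD$), pulling $\INDR$ back along $i:L=\R\to\R\otimes\R$ recovers the Baer sum of the trivial extension $\INDL$ with $L^c=\RC$, hence recovers $\RC$ itself as a \emph{Lie$^*$} extension $0\to\R\to\RC\to\R\to 0$ — which is exactly (\ref{SS}). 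The one thing that needs checking beyond this formal input is that the extra relations cut out by $Leib$ do not further modify this pullback: restricted to $\R\boxtimes\R\hookrightarrow\RC\boxtimes\RC$ (via $1\otimes\mathrm{id}$ in both variables), the three maps $\alpha_1,\alpha_2,\alpha_3$ should combine to something already accounted for, using once more the antisymmetry $\sigma\circ\{\,,\,\}\circ\sigma=-\{\,,\,\}$ and the internal relations of $\INDR$, just as in the proof of the preceding Lemma and in Remark \ref{rr}. Finally I would check that the induced Poisson bracket on the pulled-back $\RC$ agrees with $\{\,,\,\}$: this is immediate from the definition of $\alpha_3$ via $\mu_c$, since $\mu_c$ restricted to $\RC\boxtimes\RC$ is by construction $\frac1\hbar[\,,\,]_\hbar$ reduced mod $\hbar$, together with the $1/2$-correction noted in the footnote of \ref{particular} (recall Theorem \ref{bbdd} quantizes $2\{\,,\,\}$, and the action of $\RC$ on $\R$ was normalized by $1/2$ precisely to compensate).

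The main obstacle I anticipate is part (1), specifically verifying that $\R'$ is all of $\R$ rather than a proper quotient. The subtlety is that $\widetilde{\OC}$ is a quotient of $\INDR$, so a priori the copy of $\R$ inside $\INDR$ could collapse; ruling this out requires showing that the image of $Leib$ intersects the sub-$\DX$-module $\R\subset\INDR$ trivially. This is not formal: it uses the precise form of the three maps $\alpha_i$ and the fact (established in the Lemma's proof) that $\alpha_1,\alpha_2$ already factor through $\Delta_!(\R)$ \emph{after using the relations defining $\INDR$ from $\IND$}. I would handle it by a careful diagram chase over $X\times X$, tracking the image of $\RC\boxtimes\RC$ under $\alpha_1-\alpha_2-\alpha_3$ through the two successive quotients ($\IND\to\INDR\to\widetilde{\OC}$), and showing that any element of $\R\cap\mathrm{Im}(Leib)$ is forced to be zero because the only contribution to the $\R$-component of $\alpha_1-\alpha_2-\alpha_3$ comes from $\alpha_3$ via $\mu_c$, whose restriction to the diagonal already vanishes on the relevant subsheaf. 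Part (2), by contrast, should follow fairly directly from the formal torsor/pullback yoga of \ref{classic2} together with the normalization checks above.
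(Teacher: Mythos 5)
Your proposal has a genuine gap in part (1), and it is precisely at the point you yourself flag as the main obstacle. You propose to show directly, by a diagram chase, that the image of $Leib$ meets the copy of $\R$ inside $\INDR$ trivially, and your heuristic for why this should work is that ``the only contribution to the $\R$-component of $\alpha_1-\alpha_2-\alpha_3$ comes from $\alpha_3$.'' That is not correct: as the proof of the preceding Lemma shows, $\alpha_1$ and $\alpha_2$ restricted to $\RC\boxtimes\RC$ land in $\Delta_!(\R)$ via $\pm\frac{1}{2}\{\,,\,\}$, so all three maps contribute to the $\R$-component, and off the diagonal (on all of $j_*j^*(\RC\boxtimes\RC)$) the bookkeeping of which kernel elements of $\Delta_!(\RC\otimes\RC)\to\Delta_!(\R\otimes\R)$ get sent into $\R\subset\INDR$ is exactly the hard computation you have not carried out. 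The paper avoids this computation entirely by a different and cleaner device: it invokes the \emph{abstract} chiral extension $\OC$ whose existence is guaranteed by Theorem \ref{bbdd}, together with the derivation $d^c:\RC\rightarrow\OC$ from the defining diagram (\ref{dc}), and builds a comparison map $D^c:\Delta_!(\INDR)\rightarrow\Delta_!(\OC)$ by extending $\mu_{\R,\OC}\circ(i\boxtimes d^c)$ over $\Delta_!(\R)\oplus j_*j^*(\R\boxtimes\RC)$. The identity $d^c(\mu_c)=\mu_{\R,\OC}(\pi,d^c)-\sigma\circ\mu_{\R,\OC}\circ\sigma(d^c,\pi)$ (the derivation property) matches term by term with $\alpha_1-\alpha_2-\alpha_3$, so $D^c\circ Leib=0$ and $D^c$ descends to $\widetilde{\OC}$. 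Since the induced map is the identity on the sub $\R$ and an isomorphism on the quotient $\OO$, one reads off both $\R'=\R$ and $\widetilde{\OC}\simeq\OC$ simultaneously; injectivity of $\R\rightarrow\R'$ comes for free from the factorization $\R\twoheadrightarrow\R'\rightarrow\R$ of the identity, with no need to analyze $\mathrm{Im}(Leib)\cap\R$ by hand.

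Your part (2) is also structured differently from the paper: you want to pull back along $1\otimes\mathrm{id}:\R\rightarrow\R\otimes\R$ and use the torsor/Baer-sum yoga of \ref{classic2} to recover $\RC$, then argue the Leibniz relations do not disturb the pullback. This could in principle be made to work, but as written it is contingent on part (1) (if $\R$ collapsed to a proper quotient $\R'$, the pullback would be a quotient of $\RC$), so the gap above propagates. In the paper, part (2) is not a separate argument at all: once $D^c$ induces an isomorphism $\widetilde{\OC}\xrightarrow{\sim}\OC$, the pullback statement is inherited from the defining property of $\OC$. Your normalization remarks about the factor $\frac{1}{2}$ and the bracket $\mu_c$ are consistent with the paper, but they do not repair the missing injectivity argument.
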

\begin{proof}
\noindent To show that $\R'=\R$, consider the chiral extension given by the equivalence of Theorem \ref{bbdd}. This is an extension of $\OO$ such that the pull back via the differential $\R\rightarrow \OO$ is the sequence (\ref{SS}). that is, we have the following diagram
\begin{equation}\label{dc}
\xymatrix{0\ar[r]& \R\ar[r]^-{i}&\OC\ar[r]&\OO\ar[r]&0,\\
0\ar[r]&\R\ar[r]\ar[u]&\RC\ar[r]^{\pi}\ar[u]^{d^c}&\R\ar[r]\ar[u]^d&0}
\end{equation}
with $d^c$ a derivation, i.e. as maps from $j_*j^*(\RC\boxtimes \RC)$ to $\Delta_!(\OC)$, we have $d^c(\mu_{c})=\mu_{\R,\OC}(\pi,d^c)-\sigma\circ \mu_{\R,\OC}\circ \sigma(d^c, \pi)$,
 where $\mu_{c}$ is the chiral product on $\RC$ and $\mu_{\R,\OC}$ is the chiral action of $\R$ on $\OC$.
We claim that there is a map of short exact sequences
\[
\xymatrix{0\ar[r]&\Delta_!(\R)\ar[r]\ar[d]^-{id}&\Delta_!(\INDR)\ar[r]\ar[d]&\Delta_!(\R\otimes \R)\ar[r]\ar[d]&0\\
0\ar[r]&\Delta_!(\R)\ar[r]&\Delta_!(\OC)\ar[r]&\Delta_!(\OO)\ar[r]&0
}
\]
that factors through $0\rightarrow \Delta_!(\R')\rightarrow \Delta_!(\widetilde{\OC})\rightarrow \Delta_!(\OO)\rightarrow 0$, 
and moreover induces an isomorphism from $\OO$ to $\OO$. This would imply that $\R'$, which a 
priori is a quotient of $\R$, is in fact $\R$ itself. Furthermore, the fact that it is an isomorphism on $\OO$, would also imply that $\widetilde{\OC}\simeq \OC$, hence the pull-back via $d:\R\rightarrow \OO$ would indeed be the original sequence $0\rightarrow \R\rightarrow \RC\rightarrow \R\rightarrow 0$.\\

\vspace{0.2cm}
\noindent To prove the claim, consider the map $d^c:\RC\rightarrow \OC$ given by (\ref{dc}). Using the chiral $\R$-module structure $\mu_{\R,\OC}$ on $\OC$, we can consider the composition
\[
j_*j^*(\R\boxtimes \RC)\xrightarrow{i\boxtimes d^c}j_*j^*(\R\boxtimes \OC)\xrightarrow{\mu_{\R,\OC}}\Delta_!(\OC).
\]
The above composition can be extended to a map from $\Delta_!(\R)\oplus j_*j^*(\R\boxtimes \RC)\rightarrow \Delta_!(\OC)$, by setting the map to be $\Delta_!(i)$ on $\Delta_!(\R)$. It is straightforward to check that this map factors through a map $D^c$
\[
D^c: \Delta_!(\INDR)\rightarrow \Delta_!(\OC).
\] 
\noindent Note that, by construction, the resulting map $\overline{D}^c: \R\otimes \R\rightarrow \OO$ is the one given by $z\otimes w\mapsto zdw$, for $z$ and $w$ 
in $\R$, and that the kernel of this map is just the ideal defining the Leibniz rule.\\
To show that $D^c$ factors through $0\rightarrow \Delta_!(\R')\rightarrow \Delta_!(\widetilde{\OC})\rightarrow \Delta_!(\OO)\rightarrow 0$,  
we need to show that the
 composition of $D^c$ with the map $$Leib :\Delta_!(\RC\otimes \RC)\rightarrow \Delta_!(\INDR)$$ given in \ref{rrr}. 
vanishes.
Hence we are left with checking that the composition
\[
\Delta_!(\RC\otimes \RC)\xrightarrow{Leib}\Delta_!(\INDR)\xrightarrow{\Delta_!(D^c)}\Delta_!(\OC)
\] is zero. For this,  recall that the map $Leib$ was constructed using the linear combination $\alpha_1-\alpha_2-\alpha_3$ of three 
maps $\alpha_1$, $\alpha_2$ and $\alpha_3$ from $j_*j^*(\RC\boxtimes \RC)$. By looking at the map
\[
j_*j^*(\RC\boxtimes \RC)\xrightarrow{\alpha_1-\alpha_2-\alpha_3}\Delta_!(\INDR)\xrightarrow{\Delta_!(D^c)}\Delta_!(\OC),
\]
we see that the condition on $d^c$ being a derivation, implies that the above composition vanishes. Indeed $\Delta_!(D^c)\circ \alpha_1$ is given by
\[
 j_*j^*(\RC\boxtimes \RC)\xrightarrow{\pi\boxtimes id}j_*j^*(\R\boxtimes \RC)\xrightarrow{id\boxtimes d^c}\j_*j^*(\R\boxtimes \OC)\xrightarrow{\mu_{\R,\OC}}\Delta_!(\OC).
\]
The map $\Delta_!(D^c)\circ\alpha_2$ is given by the above by applying the transposition of variables $\sigma$, whereas the third map
\[j_*j^*(\RC\boxtimes \RC)\xrightarrow{\mu_{c}}\Delta_!(\RC)\rightarrow \Delta_!(\INDR)\xrightarrow{D^c}\Delta_!(\OC)\]
is equal to
 $j_*j^*(\RC\boxtimes \RC)\xrightarrow{\mu_{c}}\Delta_!(\RC)\xrightarrow{\Delta_!(d^c)}\Delta_!(\OC).$ 
Therefore the above maps coincide with the terms in the relation $d^c(\mu_{c})=\mu_{\R,\OC}(\pi,d^c)-\sigma\circ \mu_{\R,\OC}\circ \sigma(d^c, \pi)$, and hence 
$\Delta_!(D^c)\circ Leib$ in zero.
Note that the resulting map  
$$\Delta_!(\INDR)\overset{\pi}{\twoheadrightarrow}\Delta_!(\R\otimes \R)\xrightarrow{\Delta_!(\overline{D}^c)}\Delta_!(\OO)$$ induces an isomorphism 
\[
 \Delta_!(\OO)\simeq\Delta_!(\R\otimes \R)/\text{Im}(\pi\circ Leib)\xrightarrow{\sim}\Delta_!(\OO).
\]

\noindent This conclude the proof of the proposition.
\end{proof}


\section{Construction of the map F}\label{mapF}
\subsection{}\label{prope} Recall that, by definition, Theorem \ref{main2} amounts to the  construction of a map of Lie$^*$
algebras $F:\Omega^c(\ZZ)\rightarrow \B$ compatible with the $\ZZ$ structure on both sides and such that 
\begin{enumerate}
\item $F$ restricts to the embedding $l$ (given in Remark \ref{embedding}) 
on $\ZZ$.\\
\item The following diagram commutes:
\[
 \xymatrix{\Omega^c(\ZZ)/\ZZ\ar[rr]^{F}&&(\B)_1/\ZZ.\\
&\Omega^1(\ZZ)\ar[ul]^{\simeq}\ar[ur]_{\simeq}&}
\]
\end{enumerate}

\subsection{}\label{well defined} 
Since $\Delta_!(\Omega^c(\ZZ))$ was constructed as a quotient of $\Delta_!(\INDZ)$ and 
 since, by definition, $$\Delta_!(\INDZ)=\Delta_!(\ZZ)\oplus j_*j^*(\ZZ\boxtimes\ZZ^c)/\ZZ\boxtimes \ZZ^c,$$
 to construct any map $F$ from 
$\Omega^c(\ZZ)$ to $\B$
we can proceed as follows:
\begin{itemize}
 \item first we construct a map 
 $f:\ZZ^c\rightarrow \B$.
\item  Using the chiral bracket $\mu'$ on $\B$ we consider the composition
\[
j_*j^*(\ZZ\boxtimes \ZZ^c)\xrightarrow{l\boxtimes f} j_*j^*(\B\boxtimes \B)\xrightarrow{\mu'}\Delta_!(\B).
\]
This composition yields a map $$\hat{F}:\Delta_!(\ZZ)\oplus j_*j^*(\ZZ\boxtimes \ZZ^c) \rightarrow \Delta_!(\B),$$ 
by sending $\Delta_!(\ZZ)$ to $\B$ via $\Delta_!(l)$.
\item We check that the above map factors through 
a map $$\tilde{F}:\Delta_!(\INDZ)\rightarrow \Delta_!(\B).$$
\item We check that in fact if factors through $\overline{F}:\Delta_!(\INDZC)\rightarrow\Delta_!(\B)$.
\item We verify that the relations defining $\Delta_!(\Omega^c(\ZZ))$ as a quotient of $\Delta_!(\INDZC)$ are satisfied,
 i.e. that $\overline{F}$ gives the desired  map $F$ from $\Omega(\ZZ)^c$ to $\B$ under the Kashiwara equivalence.
\end{itemize}
\begin{remark}\label{condition1}
Note 
that any map $F$ constructed as before, automatically satisfies the first condition in \ref{prope}, hence to prove Theorem \ref{main2}, once the map $f$ is defined, we only have to verify that condition $(2)$ in \ref{prope} is satisfied, i.e. that the diagram above commutes. 
\end{remark}
\subsection{}{\scshape Definition of the map $f$. }
We will now define the map $f:\ZZ^c\rightarrow \B$ and hence, according to the first two points in \ref{well defined}, 
the map $\hat{F}:\Delta_!(\ZZ)\oplus j_*j^*(\ZZ\boxtimes \ZZ^c) \rightarrow \Delta_!(\B)$. In \ref{end}, assuming that it factors 
through a map
$F:\Omega^c(\ZZ)\rightarrow \B$, we will then show that it satisfies the second condition in \ref{prope}. This will conclude the proof of 
Theorem \ref{main2}. The poof that 
it factors through $\Omega^c(\ZZ)$ (which amounts to the proof of the remaining last
three points in \ref{well defined}) will be postponed until \ref{relationshold}.
\subsection{}\label{mapf}
To define the map $f:\ZZ^c\rightarrow \B$ we will use the following three facts:
\begin{enumerate}
 \item There exist two embeddings 
 \[
\ZZ\xrightarrow{l}\B\xleftarrow{r}\ZZ
\]
constructed by applying the functor 
  $\Psi$ to the two embeddings in (\ref{EM}). In fact, by doing it, we obtain two maps
\[
\mathcal{W}_{\hbar}\xrightarrow{l_{\hbar}}(\Psi\boxtimes \Psi)(\mathcal{D}_{\hbar})\xleftarrow{r_{\hbar}}\mathcal{W}_{-\hbar}
\]
such that $l:=l_0=r_0\circ \eta=:r\circ \eta$, where we are denoting by $l_{\hbar}$ and $r_{\hbar}$ the maps $\Psi(l_{\hbar})$ and $\Psi(r_{\hbar})$ respectively. The two embedding of $\ZZ$ correspond to the above maps when $\hbar=0$. \\
\item There is a well defined map $$e:\mathcal{W}_{\hbar}\rightarrow \mathcal{W}_{-\hbar}.$$ In fact, 
since $\mathcal{W}_{\hbar}=\Psi(\mathcal{A}_{\hbar})$, and since $\mathcal{A}_{-\hbar}$ is isomorphic to $\mathcal{A}_{\hbar}$ as vector space
with the action of $\mathbbm{C}[\hbar]$ modified to $\hbar\cdot a=-\hbar a$, $a\in \mathcal{A}_{-\hbar}$, we can consider the 
map $\mathcal{W}_{\hbar}\rightarrow \mathcal{W}_{-\hbar}$ that simply sends
$\hbar$ to $-\hbar$. \\
\item The involution 
$\eta:\ZZ\rightarrow \ZZ$ can be extended to a map
$\eta:\mathcal{W}_{\hbar}\rightarrow \mathcal{W}_{\hbar}$ 
by setting $\eta(h)=h$.
\end{enumerate}
We define $f$ in the following way: for every $z_{\hbar}\in \ZZ^c=\mathcal{W}_{\hbar}/\hbar^2\mathcal{W}_{\hbar}$ we set 
\[
f(z_{\hbar})=\frac{1}{2}\frac{l_{\hbar}(z_{\hbar})-r_{\hbar}(\eta(e(z_{\hbar})))}{\hbar}\pmod \hbar.
\]
This is a well defined element in $\B$ because $l_0=r_0\circ \tau$, i.e. the numerator vanishes mod $\hbar$.\\

Assuming the proposition below, we will now show
that the resulting $F$ satisfies condition $(2)$ of \ref{prope}, which, according to Remark \ref{condition1}, concludes the proof of Theorem \ref{main2}. Proposition \ref{hold} will be proved later in \ref{relationshold}.
\begin{prop}\label{hold}
 The map  $$\hat{F}:\Delta_!(\ZZ)\oplus j_*j^*(\ZZ\boxtimes \ZZ^c) \rightarrow \Delta_!(\B),$$ obtained by using $f:\ZZ^c\rightarrow \B$ from above,
factors through a map $F:\Delta_!(\Omega^c(\ZZ))\rightarrow \Delta_!(\B)$.
\end{prop}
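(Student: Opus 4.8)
\textbf{Proof proposal for Proposition \ref{hold}.}

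The plan is to verify the three remaining bullet points of \ref{well defined} in order: that $\hat{F}$ factors through $\Delta_!(\INDZ)$, then through $\Delta_!(\INDZC)$, and finally that it kills the additional relations cutting out $\Delta_!(\Omega^c(\ZZ))$ as a quotient of $\Delta_!(\INDZC)$. Since $\Delta_!(\INDZ) = \Delta_!(\ZZ)\oplus j_*j^*(\ZZ\boxtimes \ZZ^c)/\ZZ\boxtimes \ZZ^c$, the first step amounts to checking that the composition $j_*j^*(\ZZ\boxtimes \ZZ^c)\xrightarrow{l\boxtimes f}j_*j^*(\B\boxtimes \B)\xrightarrow{\mu'}\Delta_!(\B)$ vanishes when restricted to the subsheaf $\ZZ\boxtimes \ZZ^c$, so that it descends to $j_*j^*(\ZZ\boxtimes \ZZ^c)/\ZZ\boxtimes \ZZ^c$ and glues with $\Delta_!(l)$ on $\Delta_!(\ZZ)$. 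This is essentially the statement that the $*$-operation attached to $\mu'\circ(l\boxtimes f)$ is the right one: restricted to $\ZZ\boxtimes \ZZ^c$, the chiral bracket $\mu'$ reduces to the $*$-action of $l(\ZZ)$ on $\B$, and since $\ZZ$ is central in $\B$ this $*$-action is zero. This reproduces on the nose the defining property of $\INDL$ from \ref{pppp}, so the factorization through $\Delta_!(\INDZ)$ is formal once one has identified the $*$-action.

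For the second step — descending further to $\Delta_!(\INDZC)$ — one has to check that $\tilde F$ annihilates the image of the difference of the two maps $\R\otimes\R\to\INDL$ described in \ref{bbb} (here $\R=\ZZ$, $L^c=\ZZ^c$), namely $m:\Delta_!(\ZZ\otimes\ZZ)\xrightarrow{m}\Delta_!(\ZZ)\hookrightarrow\Delta_!(\INDLL)$ versus the map $\overline k$ induced by $\pi\circ(k\boxtimes id)$. Composed with $\hat F$, the first map becomes $\Delta_!(l)$ applied to the commutative product of $\ZZ$ inside $\B$, while the second becomes $\mu'\circ(l\boxtimes f)$ evaluated on the copy $k(\ZZ)\subset\ZZ^c$, i.e.\ on elements $z_\hbar$ that reduce to $0$ mod $\hbar$. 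The key computation is that $f(k(w)) = \frac12\bigl(l_\hbar(\hbar w_\hbar) - r_\hbar(\eta e(\hbar w_\hbar))\bigr)/\hbar \bmod\hbar = \frac12\bigl(l(w) - r(\eta w)\bigr) = 0$ since $l = r\circ\eta$ on $\ZZ$; once $f$ vanishes on $k(\ZZ)$ the two maps agree trivially (both land in $l(\ZZ)\subset\B$ via the commutative product), which is exactly the compatibility needed.

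The third and genuinely substantive step is to check that $\overline F:\Delta_!(\INDZC)\to\Delta_!(\B)$ kills the image of the map $Leib:\Delta_!(\ZZ^c\otimes\ZZ^c)\to\Delta_!(\INDZC)$ built from $\alpha_1-\alpha_2-\alpha_3$ in \ref{rrr}, so that it descends to $\Delta_!(\Omega^c(\ZZ))$. Unwinding, $\overline F\circ\alpha_1$ is $j_*j^*(\ZZ^c\boxtimes\ZZ^c)\xrightarrow{\pi\boxtimes id}j_*j^*(\ZZ\boxtimes\ZZ^c)\xrightarrow{l\boxtimes f}j_*j^*(\B\boxtimes\B)\xrightarrow{\mu'}\Delta_!(\B)$, the map $\overline F\circ\alpha_2$ is obtained from it by the transposition $\sigma$, and $\overline F\circ\alpha_3$ is $j_*j^*(\ZZ^c\boxtimes\ZZ^c)\xrightarrow{\mu_c}\Delta_!(\ZZ^c)\xrightarrow{f}\Delta_!(\B)$ (composed with $\Delta_!$ of the map $\ZZ^c\to\INDZC\to\B$). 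So the vanishing $\overline F\circ Leib = 0$ is precisely the identity
\[
f(\mu_c(z_\hbar,w_\hbar)) \;=\; \mu'(l(z), f(w_\hbar)) \;-\; \sigma\,\mu'(l(w), f(z_\hbar))\,\sigma,
\]
i.e.\ the assertion that $f$ is a \emph{derivation} of $\B$ over $l$, with respect to the Lie$^*$ bracket $\mu_c$ on $\ZZ^c$ coming from $\tfrac1\hbar[\,,\,]_\hbar$. This is where the hard work lies, and the plan is to verify it by a direct $\hbar$-expansion: write out $f(z_\hbar) = \tfrac1{2\hbar}(l_\hbar(z_\hbar) - r_\hbar\eta e(z_\hbar))\bmod\hbar$, use that $l_\hbar,r_\hbar$ are genuine chiral-algebra maps from $\WW_{\pm\hbar}$ to $(\Psi\boxtimes\Psi)(\DDH)$ (so they intertwine the chiral brackets exactly, before dividing by $\hbar$), and that the Lie$^*$ bracket on $\ZZ^c$ is $\tfrac1\hbar$ times that of $\WW_\hbar$. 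The cross-terms $l_\hbar$ vs.\ $r_\hbar\eta e$ must be controlled: because $l_0 = r_0\eta$ and $l(\ZZ), r(\ZZ)$ commute inside $\B$ (as $\ZZ$ is central on both sides, by the decomposition of $(\DDC)_x$ and \cite{FG} Thm 5.4), the $O(\hbar)$ pieces of $[l_\hbar(z_\hbar), r_\hbar\eta e(w_\hbar)]$-type terms contribute nothing mod $\hbar$ after division, and one is left with exactly the Leibniz identity above. The main obstacle I expect is precisely this $\hbar$-bookkeeping with two independent deformation parameters (left via $\WW_\hbar$, right via $\WW_{-\hbar}$) and making sure the factor $\tfrac12$ and the sign from $\eta e$ (which sends $\hbar\mapsto-\hbar$) conspire correctly; the key input that makes it work is the commutativity $l(\ZZ)$-with-$r(\ZZ)$ in $\B$, which removes the would-be obstruction.
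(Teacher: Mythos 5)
Your three-step skeleton matches the paper's, and your reading of step (3) as the substantive Leibniz identity for $f$, to be checked by an $\hbar$-expansion using that $l_{\hbar}, r_{\hbar}$ are chiral-algebra maps together with the fact that their images centralize each other, is essentially the paper's strategy. But two of your intermediate claims are wrong in ways that would break the argument. First, in step (1) the descent condition is not that $\mu'\circ(l\boxtimes f)$ \emph{vanish} on $\ZZ\boxtimes\ZZ^c$: $\Delta_!(\INDZ)$ is the push-out of $j_*j^*(\ZZ\boxtimes\ZZ^c)$ along the \emph{action map} $\ZZ\boxtimes\ZZ^c\to\Delta_!(\ZZ)$ (projection followed by $\tfrac12\{\,,\,\}$), so what must be checked is that the $*$-restriction of $\mu'\circ(l\boxtimes f)$ equals $\Delta_!(l)\circ\tfrac12\{\,,\,\}\circ(\mathrm{id}\boxtimes\pi)$, which is nonzero in general. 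Your justification --- that $l(\ZZ)$ is central in $\B$ --- is false: $\B$ is a nontrivial quantization-type object and the $*$-bracket of $l(\ZZ)$ with $(\B)_1$ realizes exactly the Poisson action. The correct computation kills the $l_{\hbar}$-versus-$r_{\hbar}$ cross term by the centralizing lemma and identifies the surviving term $\tfrac{1}{2\hbar}[l_{\hbar}z_{\hbar},l_{\hbar}w_{\hbar}]$ with $\tfrac12 l(\{z,w\})$.

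Second, in step (2) you compute $f(k(w))=\tfrac12(l(w)-r(\eta w))=0$, but this drops the sign coming from $e$: since $e(\hbar a)=-\hbar\,e(a)$, one gets $f(\hbar \tilde w)=\tfrac{1}{2\hbar}\bigl(\hbar l_{\hbar}(\tilde w)+\hbar r_{\hbar}(\eta e(\tilde w))\bigr)\bmod\hbar=\tfrac12\bigl(l(w)+r(\eta w)\bigr)=l(w)$. This is not a harmless sign: with your value $f\circ k=0$, the second relation map (induced by $\mathrm{id}\boxtimes\cdot\hbar$) would go to zero under $\tilde F$ while the first goes to $l\circ m\neq 0$, so the descent to $\Delta_!(\INDZC)$ would \emph{fail}, and $F$ would not restrict to $l$ on $\ZZ$ as required by Theorem \ref{main2}. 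The descent works precisely because $f\circ k=l$, making both relation maps equal to $\mu'\circ(l\boxtimes l)$. Finally, in step (3) your claim that the $l_{\hbar}$-versus-$r_{\hbar}\eta e$ terms ``contribute nothing'' is inaccurate at the level of the full chiral operation on $j_*j^*$ (only the $*$-part vanishes); in the actual cancellation each of $\hbar\alpha_1',\hbar\alpha_2',\hbar\alpha_3'$ splits into two terms and the six terms cancel pairwise, the cross terms of $\alpha_1'$ and $\alpha_3'$ cancelling against those of $\alpha_2'$ via the antisymmetry $\mu'=-\sigma\circ\mu'\circ\sigma$, not by vanishing.
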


\subsection{}\label{end}{\scshape End of the proof of Theorem \ref{main2}.} 
\begin{proof}We are now ready to finish the proof of Theorem \ref{main2}, 
which, according to Remark \ref{condition1}, amounts 
to check that 
\[
 \xymatrix{\Omega^c(\ZZ)/\ZZ\ar[rr]^{F}&&(\B)_1/\ZZ\\
&\Omega^1(\ZZ)\ar[ul]^{\simeq}\ar[ur]_{\simeq}&}
\]
commutes. In order to do so, we will show that it commutes when composed with the map $d:\ZZ\rightarrow \Omega^1(\ZZ)$.
By looking at the composition 
$$\ZZ\xrightarrow{d}\Omega^1(\ZZ)\rightarrow\Omega^c(\ZZ)/\ZZ\xrightarrow{F} (\B)_{1}/\ZZ,$$ we see that, for $z\in \ZZ$,
the resulting map is 
\begin{equation}\label{altra}
z\mapsto \frac{1}{2}\frac{l_{\hbar}(z_{\hbar})-r_{\hbar}(\eta(e(z_{\hbar})))}{\hbar}\pmod \hbar,
\end{equation}

\noindent where $z_{\hbar}$ is any lifting of $z$ to $\ZZ^c$. Note that this map is well defined only after taking the quotient of $\B$ by $\ZZ$.\\
For the other composition, we first need to recall how the isomorphism $\Omega^1(\ZZ)\xrightarrow{\sim} (\B)_1/\ZZ$ was constructed. 
Recall from \ref{filtration1} that the filtration on $\B$ is the one induced (under $\Psi\boxtimes \Psi$) from the 
isomorphism $G$ given in Theorem \ref{REN}. Therefore the isomorphism above is the one corresponding to the composition
\begin{eqnarray*}
( \A_{crit}\underset{\ZZ}{\otimes}\A_{crit})\underset{\ZZ}{\otimes} \Omega^1(\ZZ)&\xrightarrow{\sim}U(\A^{ren,\tau})_1/ (\A_{crit}\underset{\ZZ}{\otimes}\A_{crit})\xrightarrow{G}\\
&\rightarrow \mathcal{D}^0_{crit}/l(\A_{crit})+r(\A_{crit})
\end{eqnarray*}
under $(\Psi\boxtimes \Psi)$ (here, for simplicity,  we are 
denoting the chiral envelope $U((\AL\underset{\ZZ}{\otimes}\AL),\,\A^{ren,\tau})$ by $U(\A^{ren,\tau})$). If we
 consider the inclusion of $\Omega^1(\ZZ)$ followed by
the first arrow from above, it is clear that the image in 
$U(\A^{ren,\tau})/( \A_{crit}\underset{\ZZ}{\otimes}\A_{crit})$ is $G[[t]]\times G[[t]]$ invariant.
 In particular it means that
the image of $\Omega^1(\ZZ)$ maps to $(\Psi\boxtimes \Psi)(U(\A^{ren,\tau}))/\ZZ$. 
Now, by looking at the definition of the map $G$ (see \cite{FG} 5.5.), we see that the the map 
\[
 \ZZ\xrightarrow{d} \Omega^1(\ZZ)\rightarrow (\Psi\boxtimes \Psi)(U(\A^{ren,\tau}))_1/\ZZ\xrightarrow{(\Psi\boxtimes \Psi)(G)}(\B)_1/\ZZ ,
\]
\noindent is indeed given by (\ref{altra}). This completes the proof of Theorem \ref{main}.
\end{proof}
We will now give the proof of Proposition \ref{hold}, which will occupy the rest of the article.
\subsection{}\label{relationshold}{\scshape Proof of Proposition \ref{hold}.}
\begin{proof}
Recall that the proof of Proposition \ref{hold} consists in showing the following:
\begin{enumerate}
\item the map $\hat{F}: \Delta_!(\ZZ)\oplus j_*j^*(\ZZ\boxtimes \ZZ^c) \rightarrow \Delta_!(\B)$ factors through 
a map $$\tilde{F}:\Delta_!(\INDZ)\rightarrow \Delta_!(\B).$$
\item the map $\tilde{F}$ factors through $\overline{F}:\Delta_!(\INDZC)\rightarrow\Delta_!(\B)$.
\item The relations defining $\Delta_!(\Omega^c(\ZZ))$ as a quotient of $\Delta_!(\INDZC)$ are satisfied,
 i.e. $\overline{F}$ gives the desired  map $F$ from $\Omega(\ZZ)^c$ to $\B$ under the Kashiwara equivalence.
\end{enumerate}
For this we will need the following Lemma.
\begin{lem}\label{zerobracket}
  The composition 
\[
 \mathcal{W}_{\hbar}\boxtimes  \mathcal{W}_{-\hbar}\xrightarrow{l_{\hbar}\boxtimes r_{\hbar}} (\Psi\boxtimes \Psi)(\mathcal{D}_{\hbar})\boxtimes (\Psi\boxtimes \Psi)(\mathcal{D}_{\hbar})\xrightarrow{\mu'} \Delta_!((\Psi\boxtimes \Psi)(\mathcal{D}_{\hbar}))  
\] is zero.
\end{lem}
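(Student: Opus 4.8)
The statement asserts that the images of $l_\hbar$ and $r_\hbar$ commute inside $(\Psi\boxtimes\Psi)(\DDH)$. The natural strategy is to deduce this from the analogous (and presumably already-known) fact \emph{before} applying $\Psi\boxtimes\Psi$, namely that the left and right copies $\LAH\xrightarrow{l_\hbar}\DDH\xleftarrow{r_\hbar}\LAHH$ of the chiral algebra commute inside $\DDH$. This is the chiral-algebra incarnation of the elementary statement that left-invariant and right-invariant vector fields on a group commute; in the form we need it, it is the content of the construction of $\DDH$ in \cite{AG} together with \cite{FG} Theorem 5.4 (the bimodule structure on the fibers). So the first step is to record precisely this input: the composition
\[
\LAH\boxtimes\LAHH\xrightarrow{l_\hbar\boxtimes r_\hbar}\DDH\boxtimes\DDH\hookrightarrow j_*j^*(\DDH\boxtimes\DDH)\xrightarrow{\mu_{\DDH}}\Delta_!(\DDH)
\]
vanishes, i.e. the $*$-bracket between $l_\hbar(\LAH)$ and $r_\hbar(\LAHH)$ is zero in $\DDH$.

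\textbf{Main step: functoriality of $\Psi$ under chiral brackets.} The key point is that $\Psi$ (and hence $\Psi\boxtimes\Psi$) is a functor on chiral algebras that is compatible with chiral brackets: for a morphism of chiral algebras $\phi:\A_\hbar\to\mathcal B$ one obtains $\Psi(\phi):\WW\to\Psi(\mathcal B)$, and more relevantly, the chiral bracket on $\Psi(\mathcal B)$ is computed from that on $\mathcal B$ compatibly with the $*$-operations — this is exactly what is being used implicitly in \ref{filtration1} when one transports the PBW filtration through $\Psi\boxtimes\Psi$. Concretely, applying $\Psi\boxtimes\Psi$ to the vanishing composition above, and using that $\Psi$ is exact and sends the chiral bracket $\mu_{\DDH}$ to $\mu'$ (the chiral bracket on $(\Psi\boxtimes\Psi)(\DDH)$) while sending $l_\hbar,r_\hbar$ to the maps again denoted $l_\hbar,r_\hbar$ as in \ref{embedding}, one concludes that
\[
\mathcal W_\hbar\boxtimes\mathcal W_{-\hbar}\xrightarrow{l_\hbar\boxtimes r_\hbar}(\Psi\boxtimes\Psi)(\DDH)\boxtimes(\Psi\boxtimes\Psi)(\DDH)\xrightarrow{\mu'}\Delta_!((\Psi\boxtimes\Psi)(\DDH))
\]
vanishes as well.

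\textbf{Where the difficulty lies.} The genuinely nontrivial input is the commutation of $l_\hbar(\LAH)$ with $r_\hbar(\LAHH)$ inside $\DDH$ itself; everything after that is formal. Depending on what has been black-boxed from \cite{AG} and \cite{FG}, this may need a short argument: one can check it on fibers at a point $x\in X$, where $(\DDH)_x\simeq U(\gh)\otimes_{U(\g[[t]]\oplus\mathbbm C)}\mathcal O_{G[[t]]}$ carries commuting left and right $\gh$-actions (the loop-group analogue of $\mathcal D$-modules on a group having commuting left and right invariant vector fields), and then promote the fiberwise statement to a statement of chiral algebras using that a chiral bracket is determined by its behavior on fibers / its $*$-operation. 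The second mild point to be careful about is the book-keeping of signs and of the twist $\hbar\mapsto-\hbar$: $r_\hbar$ lands via $\LAHH$, not $\LAH$, so one must make sure the commutation statement one quotes from \cite{AG} is indeed the one with opposite level on the right factor, which is precisely how $\DDH$ is set up there. Once these are in place, the lemma follows by applying the exact, bracket-preserving functor $\Psi\boxtimes\Psi$ to the fiberwise vanishing and reassembling over $X$.
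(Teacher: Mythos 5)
Your proposal is correct and follows essentially the same route as the paper: the paper quotes the vanishing of the composition $\mathcal{A}_{\hbar}\boxtimes\mathcal{A}_{-\hbar}\to\Delta_!(\mathcal{D}_{\hbar})$ from \cite{FG} (Lemma 5.2 there) and then applies the functor $\Psi\boxtimes\Psi$, exactly as in your main step. Your additional remarks on checking the commutation fiberwise are a reasonable elaboration of the black-boxed input but not needed beyond the citation.
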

\begin{proof}
In \cite{FG} Lemma 5.2 it is shown that the composition 
\[
 \mathcal{A}_{\hbar}\boxtimes  \mathcal{A}_{-\hbar}\xrightarrow{l_{\hbar}\boxtimes r_{\hbar}} \mathcal{D}_{\hbar}\boxtimes \mathcal{D}_{\hbar}\xrightarrow{\mu'} \Delta_!(\mathcal{D}_{\hbar})  
\] is zero. In other words the two embeddings centralize each other. The Lemma then, immediately 
follows by applying the functor $(\Psi\boxtimes \Psi)$.

\end{proof}

\subsection{}{\scshape Proof of $(1).$} To prove that $\hat{F}$ factors through $$\overline{F}:\Delta_!(\INDZC)\rightarrow\Delta_!(\B)$$ we use Lemma \ref{zerobracket}. 
Recall that we defined $f$ from  $\ZZ^c$ to $\B$ to be 
\[
f(z_{\hbar})=\frac{1}{2}\frac{l_{\hbar}(z_{\hbar})-r_{\hbar}(\eta(e(z_{\hbar})))}{\hbar}\pmod \hbar.
\]
Because of the above Lemma, it is clear that, when we consider the inclusion $\ZZ\boxtimes \ZZ^c\hookrightarrow j_*j^*(\ZZ\boxtimes \ZZ^c)$ 
and the composition with the map to $\Delta_!(\B)$, the resulting map factors as:
\[
 \xymatrix{\ZZ\boxtimes \ZZ^c\ar[r]\ar[d]& j_*j^*(\ZZ\boxtimes \ZZ^c)\ar[r]&\Delta_!(\B)\\
\ZZ\boxtimes \ZZ\ar[r]^{\frac{1}{2}\{\,,\,\}}& \Delta_!(\ZZ)\ar[ru]_{\Delta_!(l)}&},
\]
which implies that the map factors through a map  $\tilde{F}:\Delta_!(\INDZ)\rightarrow \Delta_!(\B)$.
\begin{remark}\label{rem9}
Note that when we restrict the map $f:\ZZ^c \rightarrow \B$ to $\ZZ\xrightarrow{\cdot \hbar}\ZZ^c$, because of the flip from $\hbar$ to $-\hbar$ in the definition of $e$,
 we simply obtain the inclusion $\ZZ\xrightarrow{l}\B$.
\end{remark}
\subsection{}{\scshape Proof of $(2).$}
Now we want to check that the relations
 defining $\Delta_!(\INDZC)$ as a quotient of $\Delta_!(\INDZ)$ are satisfied, i.e. that $\tilde{F}$ factors through a map $\overline{F}:\Delta_!(\INDZC)\rightarrow \Delta_!(\B)$.

\noindent First of all, recall that to pass from $\Delta_!(\INDZ)$ to $\Delta_!(\INDZC)$ we took the quotient by the image of the difference of two maps from $\Delta_!(\ZZ\otimes \ZZ)$ to $\Delta_!(\INDZ)$. The first map was given by 
\begin{equation}\label{M1}
\Delta_!(\ZZ\otimes \ZZ)\xrightarrow{m}\Delta_!(\ZZ)\rightarrow \Delta_!(\INDZ),
\end{equation}
while the second map was induced by the 
composition $$j_*j^*(\ZZ\boxtimes \ZZ)\xrightarrow{id\boxtimes \cdot h} j_*j^*(\ZZ\boxtimes \ZZ^c)\twoheadrightarrow \Delta_!(\INDZ),$$ 
which vanishes on $\ZZ\boxtimes \ZZ\hookrightarrow j_*j^*(\ZZ\boxtimes \ZZ)$.
When we compose the map (\ref{M1}) with $\tilde{F}$, we get $l\circ m=\mu'\circ (l\boxtimes l)$. 
However when we compose the second map with 
\[
j_*j^*(\ZZ\boxtimes \ZZ^c)\xrightarrow{l\boxtimes f}j_*j^*(\B\boxtimes \B)\xrightarrow{\mu'}\Delta_!(\B),
\]
because of Remark \ref{rem9}, we see that this map corresponds to $\mu'\circ(l\boxtimes l)$ hence the
 difference of the images goes to zero under $\tilde{F}$.

\subsection{}{\scshape Proof of $(3).$}
Now we are left with checking that $\tilde{F}$ factors through $$\overline{F}:\Delta_!(\INDZC)\rightarrow \Delta_!(\Omega^c(\ZZ)).$$
This will occupy the rest of the article. Recall that $\Delta_!(\Omega^c(\R))$ was given as a quotient of $\Delta_!(\INDZC)$ by the map $Leib$. The Leibniz relation was given as the image of a map 
\[
\Delta_!(\ZZ^c\otimes \ZZ^c)\rightarrow \Delta_!(\INDZC)
\]
and this map was the sum of three maps, $\alpha_1,$ $\alpha_2$ and $\alpha_3$,  from $j_*j^*(\ZZ^c\boxtimes \ZZ^c)$ which vanished on $\ZZ^c\boxtimes \ZZ^c$. Hence we want to check that the composition   $$\Delta_!(\ZZ^c\otimes \ZZ^c)\xrightarrow{Leib}\Delta_!(\INDZC)\xrightarrow{\overline{F}}\Delta_!(\B)$$
vanishes.
Instead of considering the map from $\Delta_!(\ZZ^c\otimes \ZZ^c)$ we can consider the three maps 
\begin{equation}\label{fine?}
\xymatrix{
j_*j^*(\ZZ^c\boxtimes \ZZ^c) \ar@<2 ex>[r]^{\alpha_1}\ar@<-2ex>[r]^{\alpha_3}\ar[r]^{\alpha_2}& \Delta_{!}(\INDZC)\ar[r]^-{\overline{F}}&\Delta_!(\B),}
\end{equation}
and show that the composition $\overline{F}\circ(\alpha_1-\alpha_2-\alpha_3)$ is zero.
Recall that the first map, $\alpha_1$, was given by projecting onto $j_*j^*(\ZZ\boxtimes \ZZ^c)$, 
i.e.
\[
\xymatrix{
 j_*j^*(\ZZ^c\boxtimes \ZZ^c)\ar@/^1pc/ [rr]^{\alpha_1}\ar[r]& j_*j^*(\ZZ\boxtimes \ZZ^c)\ar[r]_{\beta}&\Delta_!(\INDZC)},
\]
where $\beta$ denotes the second component of the projection
\[
\Delta_!(\ZZ)\oplus j_*j^*(\ZZ\boxtimes \ZZ^c)\twoheadrightarrow \Delta_!(\INDZC).
\]
The second map, $\alpha_2$, was given by $\sigma\circ\alpha_1\circ \sigma$, and
the third map $\alpha_3$ was given by the composition
\begin{eqnarray*}
 j_*j^*(\ZZ^c\boxtimes \ZZ^c)\xrightarrow{\mu_c}\Delta_!(\ZZ^c)\xrightarrow{\overline{i}}\Delta_!(\INDZ)\twoheadrightarrow\\
\twoheadrightarrow\Delta_!(\INDZC).
\end{eqnarray*}

\noindent When we compose $\alpha_3$ with the map $\overline{F}:\Delta_!(\INDZC)\rightarrow \Delta_!(\B)$, it is easy to 
see that the unit axiom implies 
that the composition is equal to
\begin{equation}
j_*j^*(\ZZ^c\boxtimes \ZZ^c)\xrightarrow{\mu_c}\Delta_!(\ZZ^c)\xrightarrow{\Delta_!(f)}\Delta_{!}(\B).
\end{equation}


\noindent Now consider the chiral algebra $(\Psi\boxtimes \Psi)(\mathcal{D}_{\hbar})$ and denote by $\mu'_{\hbar}$ its chiral operation. 
Consider the map 
$$f_{\hbar}:\mathcal{W}_{\hbar}\rightarrow (\Psi\boxtimes \Psi)(\mathcal{D}_{\hbar})$$ 
\[f_{\hbar}(z_{\hbar})= \frac{1}{2}\frac{l_{\hbar}(z_{\hbar})-r_{\hbar}(\eta(e(z_{\hbar})))}{\hbar}\in (\Psi\boxtimes \Psi)(\mathcal{D}_{g,h}),\]
(i.e. we are not taking this element $\pmod \hbar$). 
It is clear that the three maps $\alpha_1''$, $\alpha_2''$ and $\alpha_3''$ given by 
\[
j_*j^*(\WW\boxtimes \WW)\xrightarrow{l_{\hbar}\boxtimes f_{\hbar}}j_*j^*((\Psi\boxtimes \Psi)(\mathcal{D}_{\hbar})\boxtimes (\Psi\boxtimes \Psi)(\mathcal{D}_{\hbar}))\xrightarrow{\mu'_{\hbar}}
\]
\begin{equation}
\tag{$\alpha_1''$}
 \xrightarrow{\mu'_{\hbar}}\Delta_!((\Psi\boxtimes \Psi)(\mathcal{D}_{\hbar}))\rightarrow \Delta_!(\B),
\end{equation}

\[
j_*j^*(\WW\boxtimes \WW)\xrightarrow{(r_{\hbar}\circ\eta\circ e\boxtimes f_{\hbar})\circ\sigma}j_*j^*((\Psi\boxtimes \Psi)(\mathcal{D}_{\hbar})\boxtimes (\Psi\boxtimes \Psi)(\mathcal{D}_{\hbar})\xrightarrow{\sigma\circ\mu'_{\hbar}}
\]
\begin{equation}
\tag{$\alpha_2''$}
 \xrightarrow{\sigma\circ\mu'_{\hbar}}\Delta_!((\Psi\boxtimes \Psi)(\mathcal{D}_{\hbar}))\rightarrow \Delta_!(\B),
\end{equation}

\begin{equation}
\tag{$\alpha_3''$}
j_*j^*(\WW\boxtimes \WW)\xrightarrow{\mu_{\hbar}}\Delta_!(\WW)\xrightarrow{\Delta_!(f_{\hbar})}\Delta_!((\Psi\boxtimes \Psi)(\mathcal{D}_{\hbar}))\rightarrow \Delta_!(\B),
\end{equation}
\noindent respectively, vanish on $j_*j^*(\hbar^2(\WW\boxtimes\WW))\hookrightarrow j_*j^*(\WW\boxtimes \WW)$, in particular
 they define well defined maps from $j_*j^*(\ZZ^c\boxtimes \ZZ^c)$ to $\Delta_!(\B)$.
Moreover the resulting maps coincide with $\alpha_1$, $\alpha_2$ and $\alpha_3$ composed with $\overline{F}$. In fact, the first and the last coincide by definition. 
For the second one, simply note that, modulo $\hbar$, the map $r_{\hbar}\circ \eta\circ e$ equals $l$.\\

\vspace{0.3cm}
By the above, to show that the combination of the three maps given in (\ref{fine?}) is zero, it is enough to check that the combination 
$\alpha_1''-\alpha_2''-\alpha_3''$ of the above three maps vanishes.\\
Let us denote by $\alpha_1'$, $\alpha_2'$ and $\alpha_3'$ the maps from $j_*j^*(\WW\boxtimes \WW)$ to $\Delta_!((\Psi\boxtimes \Psi)(\mathcal{D}_{\hbar}))$
corresponding to $\alpha_1'',$ $\alpha_2''$ and $\alpha_3''$ respectively (i.e. before taking the maps $\pmod \hbar$).
We will show that the combination $\alpha_1'-\alpha_2'-\alpha_3'$ is already zero.\\
Because $(\Psi\boxtimes \Psi)(\mathcal{D}_{\hbar})$ 
is $h$-torsion free, it is enough to show that the three maps agree after multiplication by $h$. But now note that each of the maps 
\[
 h\alpha_1',\,h\alpha_2',\,h\alpha_3'\in \Hom(j_*j^*(\WW\boxtimes \WW), \Delta_!((\Psi\boxtimes \Psi)(\mathcal{D}_{\hbar}))),
\]
is the sum of two terms, and the sum of the resulting six maps is zero.
Indeed $h\alpha_1'$ equals the sum of the following two maps:
\begin{equation}\label{term1}
j_*j^*(\WW\boxtimes\WW)\xrightarrow{l_{\hbar}\boxtimes \l_{\hbar}} j_*j^*((\Psi\boxtimes \Psi)(\mathcal{D}_{\hbar}))\xrightarrow{\mu_{\hbar}'}\Delta_!(\mathcal{D}_{\hbar}),
\end{equation}
\begin{equation}\label{term2}
 j_*j^*(\WW\boxtimes \WW)\xrightarrow{l_{\hbar}\boxtimes r_{\hbar}\circ \eta\circ e}j_*j^*((\Psi\boxtimes \Psi)(\mathcal{D}_{\hbar}))\xrightarrow{\mu_{\hbar}'}\Delta_!(\mathcal{D}_{\hbar}).
\end{equation}
On the other hand, the map $h\alpha_3'$ is given by the sum of the following
\begin{equation}\label{term11}
 j_*j^*(\WW\boxtimes\WW)\xrightarrow{\mu_{\hbar}}\Delta_!(\WW)\xrightarrow{\Delta_!(l_{\hbar})}\Delta_!(\mathcal{D}_{\hbar}),
\end{equation}
\begin{equation}\label{term22}
  j_*j^*(\WW\boxtimes\WW)\xrightarrow{\eta\circ\mu_{\hbar}\circ(e\boxtimes e)}\delta_!(\mathcal{W}_{-\hbar})\xrightarrow{\Delta_!(r_{\hbar})}\Delta_!(\mathcal{D}_{\hbar}).
\end{equation}
It is clear that the map (\ref{term1}) equals minus the map (\ref{term11}). Similarly, the relation
$\mu'_{\hbar}=-\sigma\circ \mu'_{\hbar}\circ \sigma$ guarantees that the two maps summing 
up to $h\alpha_2'$, given by
\[
j_*j^*(\WW\boxtimes\WW)\xrightarrow{(r_{\hbar}\circ\eta\circ e\boxtimes l_{\hbar})\circ\sigma} j_*j^*((\Psi\boxtimes \Psi)(\mathcal{D}_{\hbar}))\xrightarrow{\sigma\circ\mu_{\hbar}'}\Delta_!(\mathcal{D}_{\hbar}), 
\]
\noindent and
\[
j_*j^*(\WW\boxtimes\WW)\xrightarrow{(r_{\hbar}\circ\eta\circ e\boxtimes r_{\hbar}\circ\eta\circ e)\circ\sigma} j_*j^*((\Psi\boxtimes \Psi)(\mathcal{D}_{\hbar}))\xrightarrow{\sigma\circ\mu_{\hbar}'}\Delta_!(\mathcal{D}_{\hbar}),
\]
cancel with the remaining maps (\ref{term2}) and (\ref{term22}) respectively.\\

\noindent Hence the composition $\alpha_1-\alpha_2-\alpha_3$ as a map from $j_*j^*(\ZZ^c\boxtimes \ZZ^c)$ to 
$\Delta_{!}(\B)$ is zero, i.e. the map $\overline{F}:\Delta_!(\INDZC)\rightarrow \Delta_!(\B)$ factors as
\[
\xymatrix{\Delta_!(\INDZC)\ar[d]\ar[r]^{\;\;\;\;\;\;\;\;\;\;\overline{F}}&\Delta_!(\B)\\
\Delta_!(\Omega(\ZZ)^c)\ar[ur]_{F}&}.
\]
By Kashiwara we obtain the desired map $F:\Omega(\ZZ)^c\rightarrow \B$, and this concludes the proof of Theorem \ref{main2}.\\
\end{proof}

\end{document}